\newtheorem{theorem}{Theorem}[section]
\newtheorem*{theorem*}{Theorem}
\newtheorem{proposition}[theorem]{Proposition}
\newtheorem{lemma}[theorem]{Lemma}
\theoremstyle{definition}
\newtheorem{definition}[theorem]{Definition}
\newtheorem{remark}[theorem]{Remark}
\theoremstyle{plain}
\definecolor{linkblue}{rgb}{0,0,.6}
\definecolor{citered}{rgb}{.7,0,0}
\def\C{{\mathbb C}}
\def\P{{\mathbb P}}
\def\R{{\mathbb R}}
\newcommand{\De}{\Delta}
\def\Tilde{\widetilde}
\newcommand{\Hess}{\operatorname{Hess}}
\newcommand{\purple}[1]{\textbf{\purple{#1}}}
\begin{document}

\title{Semitoric Families on Pentagon Spaces}
\author{Yichen Liu}
\address{Department of Mathematics \\
	University of Illinois at Urbana-Champaign\\Champaign, Illinois 61801}
\curraddr{}
\email{yichen23@illinois.edu}
\thanks{}

\author{Aerim Si}
\address{Department of Mathematics\\
	University of Illinois at Urbana-Champaign\\Champaign, Illinois 61801}
\curraddr{}
\email{aerimsi2@illinois.edu}
\thanks{}

\subjclass[2020]{37J35, 53D20}
\date{\today}

\begin{abstract}
Semitoric systems are a special type of 4-dimensional integrable system where one of the functions is the moment map of a Hamiltonian $S^1$-action. While their classification is well understood thanks to the work of Pelayo and V{\~u} Ng\d{o}c, relatively few explicit examples are known. Recently, Le Floch and Palmer introduced semitoric transition families in which a singular point transitions between elliptic-elliptic type and focus-focus type as the parameter varies. In this paper, we construct new semitoric transition families on pentagon spaces by interpolating two semitoric systems of toric type. More precisely, we exhibit semitoric transition families, each with at least one transition point, on pentagon spaces of different diffeotypes, all defined by the same explicit interpolation formula.
\end{abstract}

\maketitle


\section{Introduction}\label{sec:intro}
Let $(M,\omega)$ be a symplectic manifold. Unless otherwise specified, we always assume that $M$ is connected. Recall that to each smooth function $f: M \to \R$, one can associate its {\bf Hamiltonian vector field} $X_f$ given by the equation $df = \omega(X_f, \cdot)$. Given two functions $f,g: M \to \R$, their Poisson bracket $\{f,g\}$ is another function defined by $\{f,g\} := \omega(X_f,X_g)$. We say two functions $f,g$ {\bf Poisson commute} if $\{f,g\}=0$. 

An {\bf integrable system} on a $2n$-dimensional symplectic manifold $(M,\omega)$ is a map $F=(f_1,\ldots,f_n): M \to \R^n$ such that $\{f_i,f_j\}=0$ for all $i,j$ and $df_1,\ldots,df_n$ are linearly independent almost everywhere. A {\bf semitoric system} is a special kind of integrable system when $\dim M= 4$. More specifically, an integrable system $(M,\omega, F=(J,H))$ is a semitoric system if $J$ is a proper map that generates an effective Hamiltonian $S^1$-action on $M$ and the momentum map $F$ only has non-degenerate singularities without hyperbolic type (see Definition~\ref{def:semitoric}). Semitoric systems were introduced by ~\cite{VN2007} following the work of~\cite{Sy2003}. Pelayo and V{\~u} Ng\d{o}c~\cite{PeVN2009,PeVN2011} first classified simple semitoric systems. Following their work, Pelayo-Palmer-Tang~\cite{PPT24} generalized the classification to any semitoric system. Semitoric systems have been intensively studied in recent years. See~\cite{ADH-momenta,ADH-spin-osc,AHP-twist,LFP} for studies of invariants of semitoric systems and \cite{AH-survey,SV17,D23,Palmer25} for recent survey articles and additional references.

The coupled angular momenta system was the first example of a family of semitoric systems known to us. It was first introduced in~\cite{SaZh1999} and studied in detail in~\cite{LFP}. More families were studied later by many people; see~\cite{HoPa2017,DP,HohMeu,LFP24,LFP,LFP-fam2}. In~\cite{LFP24}, the authors gave the definition of semitorc families with degenerate times and semitoric transition families. A \textbf{semitoric family} with \textit{degenerate times} $t_1, \dots, t_n \in [0,1]$, is a family of integrable systems $(M,\omega,F_t)$, $0\leq t \leq 1$ on a 4-dimensional symplectic manifold $(M,\omega)$ such that $(M,\omega,F_t:=(J,H_t))$, where $H_t$ is of the form $H_t=H(t,\cdot)$ for some smooth $H:[0,1] \times M \rightarrow \R$, is semitoric if and only if $t \in [0,1] \setminus \{t_1, \dots, t_n\}$. In this paper, we study semitoric transition families on pentagon spaces.

A \textbf{semitoric transition family} with \textit{transition point} $p \in M$ and \textit{transition times} $t^-, t^+ $, with $0<t^-<t^+<1$, is a family of integrable systems $(M,\omega,F_t=(J,H_t))$, $0\leq t \leq 1$ on a 4-dimensional symplectic manifold $(M, \omega)$  such that

\begin{itemize}
    \item $(M,\omega,F_t)$ is semitoric for $t \in [0,1] \setminus \{t^-,t^+\}$;
    
    \item $p$ is a singular point of elliptic-elliptic type of $(M,\omega,F_t)$ for $t\in[0,t^-)\cup(t^+,1]$;
 
    \item $p$ is a singular point of focus-focus type of $(M,\omega,F_t)$ for $t\in(t^-,t^+)$;

    \item there are no degenerate singular points in $M \setminus \{p\}$ for $t=t^\pm$; and

    \item if $p$ is a maximum (respectively, minimum) of $(H_0)|_{J^{-1}(J(p))}$ then $p$ is a minimum (respectively, maximum) of $(H_1)|_{J^{-1}(J(p))}$.
     
\end{itemize}

Let $r_1,\ldots,r_n \in \R_+$ be positive numbers. Define the space of $n$-gons with side lengths $r_1,\ldots,r_n$ to be the set 
\[\widetilde{\mathcal{P}}(r_1,\ldots,r_n):= \{\rho: \{1,\ldots,n\} \to \R^3 \mid \sum_{i=1}^n \rho_i = 0, |\rho_i| = r_i\}.\]
The {\bf $\mathbf{n}$-polygon space} $\mathcal{P}(r_1,\ldots,r_n)$ is the quotient space $\mathcal{P}(r_1,\ldots,r_n):= \widetilde{\mathcal{P}}(r_1,\ldots,r_n)/SO(3)$. In other words, the polygon space consists of the configurations in $\R^3$ of polygons with side lengths $r_1,\ldots,r_n$, up to rotation. We say $r_1,\ldots,r_n$ are \emph{generic} if there does not exist a map $\epsilon:\{1,\ldots,n\} \to \{\pm1\}$ such that $\sum_{i=1}^n \epsilon_i r_i=0$. Generic side lengths guarantee that the polygon space, if not empty, is a smooth manifold of dimension $2n-6$. These spaces come with symplectic structures. Consider $\Tilde M: = S^2_{r_1} \times \cdots \times S^2_{r_n}$, the product of spheres of radii $r_1,\ldots,r_n$, with symplectic form $\frac{1}{r_1^2}\omega \oplus \cdots \oplus \frac{1}{r_n^2} \omega$, where $\omega_u(v,w) = u \cdot (v \times w)$. Here, we think of $S^2_{r_i}$ as a submanifold of $\R^3$. The map $\mu: \Tilde M \to \R^3$ given by $\mu(\rho_1,\ldots,\rho_n) = \sum_{i=1}^n \rho_i$ is a moment map of the diagonal Hamiltonian $SO(3)$-action on $\Tilde M$, where we identify $\mathfrak{so}_3^*$ with $\R^3$ equipped with the cross product. Then the symplectic reduction at the level $0$ gives us $\mathcal{P}(r_1,\ldots,r_n) = \mu^{-1}(0)/SO(3)$, so it comes with a symplectic structure. See~\cite{HK97,Kl94,KM95,KM96}.
Let $I \subset \{1,\ldots,n\}$ be a nonempty proper subset. Define $\ell_I: \mathcal{P}(r_1,\ldots,r_n) \to \R$ by $\ell_I([\rho])=|\sum_{i \in I} \rho_i|$.

Before stating our result, we first define two important pentagons $P,Q$ that will be transition points of $F_t$ and numbers $t_P^\pm, t_Q^\pm$ that will be transition times. For positive numbers $r_1,\ldots,r_5$, we define $j_P = r_3+r_4-r_5$, $j_Q = -r_3+r_4+r_5$.
The pentagon $P \in \mathcal{P}(r_1,r_2,r_3,r_4,r_5)$ is represented by the planar pentagon whose five edges are given by 
    \begin{gather*}
        \rho_1 = \left(\frac{r_1^2-r^2_2 +j_P^2}{2j_P}, \frac{\sqrt{\big((r_1+j_P)^2-r_2^2\big)\big(r_2^2-(r_1-j_P)^2\big)}}{2j_P},0 \right), \\ \rho_2 = \left(\frac{r_2^2-r_1^2 +j_P^2}{2j_P}, -\frac{\sqrt{\big((r_1+j_P)^2-r_2^2\big)\big(r_2^2-(r_1-j_P)^2\big)}}{2j_P},0 \right),\\ \rho_3 = (-r_3,0,0), \rho_4 =(-r_4,0,0), \rho_5 = (r_5,0,0).
    \end{gather*}
      
The pentagon $Q \in \mathcal{P}(r_1,r_2,r_3,r_4,r_5)$ is represented by the planar pentagon whose five edges are given by 
    \begin{gather*}
        \rho_1 = \left(\frac{r_1^2-r^2_2 +j_Q^2}{2j_Q}, \frac{\sqrt{\big((r_1+j_Q)^2-r_2^2\big)\big(r_2^2-(r_1-j_Q)^2\big)}}{2j_Q},0 \right), \\ \rho_2 = \left(\frac{r_2^2-r_1^2 +j_Q^2}{2j_Q}, -\frac{\sqrt{\big((r_1+j_Q)^2-r_2^2\big)\big(r_2^2-(r_1-j_Q)^2\big)}}{2j_Q},0 \right),\\ \rho_3 = (r_3,0,0), \rho_4 =(-r_4,0,0), \rho_5 = (-r_5,0,0).
    \end{gather*}
The quantities $t_P^\pm, t_Q^\pm$ are defined as follows.
\[t_P^\pm = \frac{\,r_4j_P+r_3r_5+(r_4-r_5)^2\,\pm\,2\sqrt{r_{3}r_{4}r_{5}j_P}}
       {\, (r_{3}+r_{5})^{2}+4r_{4}j_P \,},\quad t_Q^\pm = \frac{\,r_4j_Q+r_{3}r_{5}+(r_4+r_5)^2\,\pm\,2\sqrt{r_{3}r_{4}r_{5}j_Q}}
       {\, (r_{3}+r_{5})^{2}+4r_{4}j_Q \,}.\] 
    Notice that $P,Q$ are not always well-defined. More specifically, $P$ is well-defined in case (P),(P+Q), but not in case (Q) in Theorem~\ref{thm:main} below and $Q$ is well-defined in case (Q),(P+Q), but not in case (P). Similarly, $t_P^{\pm}$ is a well-defined real number in case (P),(P+Q), but may not be a real number in case (Q) and vice versa.

\begin{theorem}\label{thm:main}
    Let $r_1,\ldots,r_5$ be positive numbers. Consider the family $F_t:=(\ell_{12},t\ell_{34}^2 +(1-t)\ell_{45}^2)$ on $\mathcal{P}(r_1,r_2,r_3,r_4,r_5)$.
    \begin{enumerate}
        \item[(P)] If $|r_3-r_4-r_5| < |r_1-r_2| < r_3+r_4-r_5$ and $r_3+r_4-r_5 < r_1+r_2 < r_3-r_4+r_5$, $F_t$ is a semitoric transition family with transition point $P$ and transition times $t_P^\pm$.
        \item[(Q)] If $|r_3+r_4-r_5| < |r_1-r_2| < -r_3+r_4+r_5$ and $-r_3+r_4+r_5 < r_1+r_2 < r_3-r_4+r_5$, $F_t$ is a semitoric transition family with transition point $Q$ and transition times $t_Q^\pm$.
        \item[(P+Q)] If $-r_4+|r_3-r_5| < |r_1-r_2| < r_4-|r_3-r_5|$, $r_4+|r_3-r_5| < r_1+r_2 < r_3-r_4+r_5$ and $r_3 \neq r_5$, $F_t$ is a semitoric transition family with two distinct transition points $P$  and $Q$ with transition times $t_P^\pm$ and $t_Q^\pm$ respectively.        
        \end{enumerate}    
\end{theorem}
\begin{remark}
    In case $(P+Q)$, we assume that $r_3 \neq r_5$, which implies that $0 < t_P^- < t_P^+ < \frac{1}{2} < t_Q^{-} < t_Q^{+} <1$. When $r_3=r_5$, both $P$ and $Q$ are degenerate at the time $t_P^+ = t_Q^- = \frac{1}{2}$, so the fourth bullet point in the definition of a semitoric transition family is not satisfied. In this case, $P$ and $Q$ are in the same fiber of $(J,H_{\frac{1}{2}})$ which is a Lagrangian two-sphere.
\end{remark}

\begin{proof}[Proof of Theorem~\ref{thm:main}]
The proof of all three cases are almost identical, so we will only prove case (P).
We write $F_t = (J,H_t)$ and $M = \mathcal{P}(r_1,r_2,r_3,r_4,r_5)$. By assumption, the moment images of $(J,\sqrt{H_0})$ and $(J,\sqrt{H_1})$ are both Delzant polygons (see Figure~\ref{fig:moment-image}). Hence, for $t=0,1$, the system $F_t$ is of toric type.
Moreover, $H_1(P) = (r_3+r_4)^2$ is the global maximum of $H_1$ on $M$ and $H_0(P) = (r_4-r_5)^2$ is the global minimum of $H_0$ on $M$. Hence, it follows from Proposition~\ref{prop:transition-point} that $P$ is a transition point with transition times $t_P^\pm$.

We now show that for all $t \in [0,1]$, all singular points of $F_t$ other than $P$ are non-degenerate of either elliptic-elliptic or elliptic-regular type. Fix a singular point $m \in M \setminus \{P\}$ and define $c:=J(m)$. If $c = |r_1-r_2|$ or $c= r_1+r_2$, then $J^{-1}(c)$ is a fixed surface by Lemma~\ref{lem:fixed-components}, so Proposition~\ref{prop:reduction} implies that $J^{-1}(c)/S^1  = J^{-1}(c)\cong \mathcal{P}(c,r_3,r_4,r_5)$.  Proposition~\ref{prop:singular-points-on-fixed-surface} and Proposition~\ref{prop:rank-1} together imply that if $m$ is of rank one, then it is of elliptic-regular type, and if $m$ is of rank zero, then it is of elliptic-elliptic type.

Now assume that $c \neq |r_1-r_2|, r_1+r_2$. Since $m \neq P$, Lemma~\ref{lem:fixed-components} implies that the $S^1$-action generated by $J$ acts freely on $m$ and thus $m$ is  of rank one. Moreover, if we further assume $c \neq j_P$, then $c$ is a regular value of $J$ and $S^1$ acts freely on $J^{-1}(c)$. Hence, Proposition~\ref{prop:reduction} implies that $M^{\textrm{red},c}:= J^{-1}(c)/S^1 \cong \mathcal{P}(c,r_3,r_4,r_5)$. Let  $H_t^{\textrm{red},c}$ be the function on $M^{\textrm{red},c}$ to which $H_t$ descends, and $[m] \in M^{\textrm{red},c}$ be the point to which $m$ descends. The first part of Lemma~\ref{lem:LFP-rk1} implies that $[m]$ is a singular point of $H_t^{\textrm{red},c}$, which is an elliptic (in the Morse sense) singular point by Proposition~\ref{prop:rank-1}. Hence, by the second part of Lemma~\ref{lem:LFP-rk1}, $m$ is an elliptic-regular singularity of $F_t$.

Finally if $c=j_P$, then $M^{\textrm{red},c}$ is not a manifold. However, since $S^1$ acts freely on $m$, there is an $S^1$-invariant neighborhood around the orbit of $m$ on which $S^1$ acts freely. Hence, one can do symplectic reduction locally, and verify that $[m]$ is an elliptic singularity of $H_t^{\textrm{red},c}$ by computing its Hessian in the same way as in the proof of Proposition~\ref{prop:rank-1}. Therefore, any rank one points that are not on the fixed surfaces are of elliptic-regular type.

Therefore, we conclude that for $t \in [0,1] \setminus \{t_P^-,t_P^+ \}$, $(M,\omega,F_t)$ is a semitoric system, and when $t = t_P^\pm$, there are no degenerate singular points in $M \setminus \{P\}$.

\end{proof}

\begin{figure}
\centering
\begin{subfigure}{0.45\textwidth}
\centering
    \begin{tikzpicture}[scale=0.3]
    
    \draw (0,1) -- (8,1) -- (8,9) -- (4,9) -- (0,5) -- (0,1);
    \draw[dashed] (4,4) -- (4,9);
    \node at (4,4) {$\times$};
\end{tikzpicture}
\caption{Case (P), $t \in (t_P^-,t_P^+)$}
\end{subfigure}
\begin{subfigure}{0.45\textwidth}
\centering
    \begin{tikzpicture}[scale=0.3]

    \draw (0,9) -- (8,9) -- (8,1) -- (4,1) -- (0,5) -- (0,9);
    \draw[dashed] (4,1) -- (4,4);
    \node at (4,4) {$\times$};
\end{tikzpicture}
\caption{Case (Q), $t \in (t_Q^-,t_Q^+)$}
\end{subfigure}
\begin{subfigure}{0.45\textwidth}
\centering
    \begin{tikzpicture}[scale=0.3]
    \draw (2,1) -- (8,1) -- (8,9) -- (4,9) -- (0,5) -- (0,3) -- (2,1);
    \draw[dashed] (4,5) -- (4,9);
    \node at (4,5) {$\times$};
    \node at (4,-1) {$t\in (t_P^-,t_P^+)$};
    \draw (12,1) -- (18,1) -- (18,9) -- (14,9) -- (10,5) -- (10,3) -- (12,1);
    \draw[dashed] (12,1) -- (12,3);
    \node at (12,3) {$\times$};
    \node at (14,-1) {$t\in (t_Q^-,t_Q^+)$};
\end{tikzpicture}
\caption{Case (P+Q), where $r_3>r_5$}
\end{subfigure}
\begin{subfigure}{0.45\textwidth}
\centering
    \begin{tikzpicture}[scale=0.3]
    \draw (2,9) -- (8,9) -- (8,1) -- (4,1) -- (0,5) -- (0,7) -- (2,9);
    \draw[dashed] (4,3) -- (4,1);
    \node at (4,3) {$\times$};
    \node at (4,-1) {$t\in (t_P^-,t_P^+)$};
    \draw (12,9) -- (18,9) -- (18,1) -- (14,1) -- (10,5) -- (10,7) -- (12,9);
    \draw[dashed] (12,9) -- (12,5);
    \node at (12,5) {$\times$};
    \node at (14,-1) {$t\in (t_Q^-,t_Q^+)$};
\end{tikzpicture}
\caption{Case (P+Q), where $r_3<r_5$}
\end{subfigure}
\caption{Each figure is a presentative of the semitoric polygon of the corresponding case when the focus-focus point is present.}
\label{fig:semitoric-polygons}
\end{figure}

\begin{figure}
\centering
\begin{subfigure}{0.23\textwidth}
\centering
    \begin{tikzpicture}[scale=0.3]
    \draw (0,1) -- (6,1) -- (8,3) -- (8,9) -- (4,9) -- (0,5) -- (0,1);
    \draw[dashed] (4,4) -- (4,9);
    \node at (4,4) {$\times$};
\end{tikzpicture}
\end{subfigure}
\begin{subfigure}{0.23\textwidth}
\centering
    \begin{tikzpicture}[scale=0.3]

    \draw (0,9) -- (8,9) -- (8,3) -- (6,1) -- (4,1) -- (0,5) -- (0,9);
    \draw[dashed] (4,1) -- (4,4);
    \node at (4,4) {$\times$};
\end{tikzpicture}
\end{subfigure}
\begin{subfigure}{0.23\textwidth}
\centering
    \begin{tikzpicture}[scale=0.3]
    \draw (2,1) -- (6,1) -- (8,3) -- (8,9) -- (4,9) -- (0,5) -- (0,3) -- (2,1);
    \draw[dashed] (4,4) -- (4,9);
    \node at (4,4) {$\times$};
\end{tikzpicture}
\end{subfigure}
\begin{subfigure}{0.23\textwidth}
\centering
    \begin{tikzpicture}[scale=0.3]
    \draw (2,9) -- (8,9) -- (8,3) -- (6,1) -- (4,1) -- (0,5) -- (0,7) -- (2,9);
    \draw[dashed] (4,4) -- (4,1);
    \node at (4,4) {$\times$};
\end{tikzpicture}
\end{subfigure}
\caption{Other semitoric polygons that can be obtained from the same family $F_t$ by changing the inequalities between the parameters $r_1,\dots,r_5$.}
\label{fig:remark}
\end{figure}

\begin{remark} 
Theorem~\ref{thm:main} gives new explicit examples of semitoric systems. In this remark, we explain possible generalizations of our constructions and how it is related to other works.
\begin{enumerate}
    \item The index sets in the statement are chosen for convenience. In fact, one may take any three distinct index sets $I,J,K$ of size $2$ such that $I$ is disjoint from both $J,K$ and consider the families $(\ell_I,t\ell_J^2+(1-t)\ell_K^2)$ on pentagon spaces whose side lengths satisfy constraints similar to those in Theorem~\ref{thm:main}. 
    \item Different choices of side lengths $r_1,\ldots,r_5$ that satisfy the same set of inequalities give different symplectic forms on pentagon spaces of the same diffeotype, so these families will be useful in understanding how the change of symplectic form affects the transition time and other invariants of these semitoric systems.
    \item Our construction can be generalized to $n$-polygon spaces $\mathcal{P}(r_1,\ldots,r_n)$ with $n >5$ to study higher-dimensional integrable systems where some components of the functions give a moment map of a Hamiltonian torus action. 
    \item The semitoric polygon invariant is one of the invariants of a semitoric system, analogous to the Delzant polytopes in the classification of symplectic toric manifolds. For the definition, we refer the reader to~\cite{VN2007} and~\cite[Section 4]{PeVN2009}. By \cite[Lemma 3.14]{LFP24}, a representative of the semitoric polygon (when the focus-focus point is present) of these families can be obtained by adding a marked point and a cut line to the semitoric polygon of the system $(J,H_1)$. We display these images in Figure~\ref{fig:semitoric-polygons}. 
    \item 
    We can obtain additional semitoric polygons shown in Figure~\ref{fig:remark} from the same family $F_t$ by changing the inequalities between the parameters $r_1,\dots,r_5$. But since the proof for the additional cases is essentially the same, we only list  the three cases (P),(Q), and (P+Q) for simplicity.    
     In fact, the other cases shown in Figure~\ref{fig:remark} also follow from \cite[Theorem 1.7]{LFP24} as these polygons are obtained by performing a toric type blowup. 
    \item By \cite{HK97}, pentagon spaces are generically symplectic toric four-manifolds, and there are in total five diffeotypes $\C\P^2,(S^2 \times S^2) \# k\overline{\C\P^2}$, $k=0,1,2,3$. Le Floch and Palmer proved that $\C\P^2$ does not admit a semitoric transition family (see~\cite[Proposition 5.2]{LFP-fam2} and the discussion there). Le Floch and Pelayo studied the semitoric transition family given by the coupled angular momenta system on $S^2 \times S^2$ in detail (see~\cite{LFP}). Our results provide new explicit families for diffeotypes $(S^2 \times S^2) \# k\overline{\C\P^2}$, $k=1,2,3$. 
\end{enumerate}
\end{remark}

\subsection*{Structure of the paper:} In Section~\ref{sec:polygon spaces}, we review some facts about polygon spaces and study an $S^1$-action on the pentagon space of interest. In Section~\ref{sec:semitoric}, we review the basic concepts of semitoric systems and study the local theory of the transition point. In Section~\ref{sec:non-deg}, we study some Morse functions on quadrilateral spaces. We apply the main result, Proposition~\ref{prop:rank-1}, to functions induced by $H_t$ on the symplectic reduced spaces $J^{-1}(c)/S^1$ in the proof of Theorem~\ref{thm:main}. In Section~\ref{sec:transition}, we study the transition point and transition times.

\subsection*{Acknowledgements.} We would like to express sincere gratitude to Yohann Le Floch, Joseph Palmer, and Susan Tolman for their patient guidance, insightful discussions, and invaluable feedback during the development of this project. The first author also wants to thank Tara Holm for introducing polygon spaces during his visit to Cornell University, and Allen Knutson for some helpful discussions about pentagon spaces. The first author was partially supported by NSF Grant 2204359.

\section{Polygon spaces}\label{sec:polygon spaces}

In this section, we review some properties of polygon spaces. For a nonempty proper subset $I \subsetneq \{1,\ldots,n\}$, we define $\ell_I: \mathcal{P}(r_1,\ldots,r_n) \to \R$ by $\ell_I([\rho])=|\sum_{i \in I} \rho_i|$. Its Hamiltonian flow is called the \textbf{bending flow} associated to $I$. Whenever $\ell_I$ never vanishes on $\mathcal{P}(r_1,\ldots,r_n)$, the bending flow associated to $I$ is periodic (see~\cite[section 2.1]{Kl94} or ~\cite[Corollary 3.9]{KM96}). It rotates the set of vectors $\{\rho_i : i \in I\}$ around the axis $\sum_{i\in I} \rho_i$. Hence, $\ell_I$ is a moment map of a Hamiltonian $S^1$-action on $\mathcal{P}(r_1,\ldots,r_n)$. We first show that reducing by these $S^1$-actions yields another polygon space. For simplicity, we will pick $I=\{1,2\}$. 

\begin{lemma}\label{lem:reduction}
    Fix positive numbers $r_1> r_2$. Let $M:= S^2_{r_1} \times S^2_{r_2}$ be equipped with the symplectic form $\frac{1}{r_1^2}\omega \oplus \frac{1}{r_2^2} \omega$, where $\omega_u(v,w) = u \cdot (v \times w)$. Consider the Hamiltonian $S^1$-action on $M$ with moment map $f(\rho_1,\rho_2) =|\rho_1+\rho_2|$. For any $c\in [r_1-r_2, r_1+r_2]$, $f^{-1}(c)/S^1$ is a symplectic manifold symplectomorphic to $(S^2_c, \frac{1}{c^2}\omega)$.
\end{lemma}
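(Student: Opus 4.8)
The plan is to exhibit the reduction explicitly through the map recording the bending axis. Define $\Phi\colon f^{-1}(c)\to S^2_c$ by $\Phi(\rho_1,\rho_2)=\rho_1+\rho_2$; since $|\rho_1+\rho_2|=f=c$ along the level set, the image lies in the sphere of radius $c$. Because the bending flow rotates $\rho_1$ and $\rho_2$ about their common sum $\rho_1+\rho_2$, the map $\Phi$ is $S^1$-invariant and descends to $\bar\Phi\colon f^{-1}(c)/S^1\to S^2_c$. The whole statement then reduces to showing that $\bar\Phi$ is a symplectomorphism onto $(S^2_c,\tfrac{1}{c^2}\omega)$.

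I would first treat the generic case $c\in(r_1-r_2,r_1+r_2)$. Here $c$ is a regular value of $f=|\mu|$ with $\mu=\rho_1+\rho_2$: one computes $\mathrm df=\hat\mu\cdot\mathrm d\mu$, which vanishes only when $\mu$ is orthogonal to both tangent spaces, i.e. when $\rho_1,\rho_2$ are collinear, forcing $c=r_1\pm r_2$. The same condition shows the bending $S^1$-action on $f^{-1}(c)$ is free, since a nonzero rotation about the axis $\mu$ fixes $(\rho_1,\rho_2)$ only if both vectors lie on that axis. Thus $f^{-1}(c)$ is a smooth $3$-manifold, the Marsden--Weinstein quotient $f^{-1}(c)/S^1$ is a symplectic surface with reduced form $\omega_{\mathrm{red}}$ characterized by $\pi^*\omega_{\mathrm{red}}=\iota^*\omega_M$, and $\bar\Phi$ is well defined. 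To see $\bar\Phi$ is a diffeomorphism, I would use that $\Phi$ is $SO(3)$-equivariant for the diagonal rotation action and surjective (the closing-up inequalities $|r_1-r_2|\le c\le r_1+r_2$ guarantee a pair over each $\mu_0$), while each fiber $\Phi^{-1}(\mu_0)$ is a single bending orbit; hence $\bar\Phi$ is an equivariant continuous bijection between $SO(3)$-homogeneous surfaces, so a diffeomorphism.

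The key step is the symplectic normalization, for which I would invoke a moment-map uniqueness argument instead of a coordinate computation. With the normalization $\tfrac1{r^2}\omega$ the diagonal rotation moment map on a single sphere $(S^2_r,\tfrac1{r^2}\omega)$ is exactly the inclusion $S^2_r\hookrightarrow\R^3\cong\mathfrak{so}_3^*$, so the $SO(3)$-moment map on $M$ is $\mu(\rho_1,\rho_2)=\rho_1+\rho_2$. Since $f=|\mu|$ is $SO(3)$-invariant, the rotation action preserves $f^{-1}(c)$ and carries bending orbits to bending orbits, hence descends to the quotient; a short diagram chase starting from $\pi^*\omega_{\mathrm{red}}=\iota^*\omega_M$ shows that the descended $SO(3)$-moment map on $(f^{-1}(c)/S^1,\omega_{\mathrm{red}})$ is exactly $\bar\Phi$. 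Transporting along $\bar\Phi$, both $\omega_{\mathrm{red}}$ and $\tfrac1{c^2}\omega$ become $SO(3)$-invariant symplectic forms on $S^2_c$ whose moment map is the inclusion $m$. Because the action is transitive, the generating vector fields $X_\xi$ span each tangent space, and the identity $\omega(X_\xi,X_\eta)=\mathrm d\langle m,\xi\rangle(X_\eta)$ recovers $\omega$ from $m$ and the action alone; the two forms therefore coincide, giving $\bar\Phi^*(\tfrac1{c^2}\omega)=\omega_{\mathrm{red}}$.

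Finally I would dispatch the endpoints $c=r_1\pm r_2$ directly, since there $c$ is no longer a regular value. At these values $\rho_1,\rho_2$ are forced to be (anti)parallel, so $f^{-1}(c)\cong S^2$ is already two-dimensional and the bending $S^1$ acts trivially; parametrizing by the common unit direction $n$ and restricting $\omega_M$ yields $\iota^*\omega_M=\Phi^*(\tfrac1{c^2}\omega)=c\,\bigl(n\cdot(v\times w)\bigr)$ on the nose, so $\Phi$ itself is the desired symplectomorphism. The main obstacle is pinning down the exact factor $\tfrac1{c^2}$ in the reduced form; the moment-map uniqueness argument is what makes this clean, and the only real care needed is verifying that the rotation action genuinely descends to the quotient (the bending circle is a pointwise-varying subgroup of $SO(3)$, not a fixed one) and that its descended moment map is precisely $\bar\Phi$.
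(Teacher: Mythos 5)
Your proposal is correct, and its skeleton (the map $\Phi(\rho_1,\rho_2)=\rho_1+\rho_2$, the regular-value/free-action discussion, the single-orbit fibers, and the separate treatment of the endpoints $c=r_1\pm r_2$) coincides with the paper's. The genuine difference is in the central step, the identification of the reduced form with $\tfrac{1}{c^2}\omega$. The paper verifies $\Phi^*(\tfrac{1}{c^2}\omega)=\iota^*(\tfrac{1}{r_1^2}\omega\oplus\tfrac{1}{r_2^2}\omega)$ by a direct computation in the explicit tangent basis $V_i=(\rho_1\times e_i,\rho_2\times e_i)$ of the level set, which is elementary and also gives the pullback identity on $f^{-1}(c)$ itself, not just on the quotient. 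You instead observe that the diagonal $SO(3)$-action descends to the quotient with moment map $\bar\Phi$, and that an invariant symplectic form on a homogeneous Hamiltonian $G$-space is determined by its moment map via $\omega(X_\xi,X_\eta)=d\langle m,\xi\rangle(X_\eta)$; since $S^2_c$ with either form has the inclusion as moment map, the forms agree. This is essentially the uniqueness of the Kirillov--Kostant--Souriau form on the coadjoint orbit $S^2_c$, and it is cleaner and more conceptual (it explains where the factor $\tfrac{1}{c^2}$ comes from rather than checking it), at the cost of having to justify carefully the two points you yourself flag: that the rotation action genuinely descends past the pointwise-varying bending circle (which holds because $f=|\mu|$ is $SO(3)$-invariant, so the bending flow commutes with rotations), and that the descended moment map is $\bar\Phi$ (a short reduction-in-stages diagram chase from $\pi^*\omega_{\mathrm{red}}=\iota^*\omega_M$). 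Your endpoint computation is the same as the paper's, written in the unit direction $n$ rather than via the explicit inverse $\Psi(\rho)=(\tfrac{r_1}{c}\rho,\pm\tfrac{r_2}{c}\rho)$.
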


\begin{proof}
     The Hamiltonian vector field of $f$ is given by $X_f(\rho_1,\rho_2)=\frac{1}{|\rho_1+\rho_2|}(\rho_2 \times \rho_1, \rho_1 \times \rho_2)$. Hence, if $c \in (r_1-r_2, r_1+r_2)$, then $c$ is a regular value of $f$. Moreover, for any $(\rho_1,\rho_2)\in f^{-1}(c)$, $S^1$ acts on it by rotating it around $\rho_1+\rho_2$ (see \cite[Proposition 3.11]{KM96} for example), so $S^1$ acts freely on $f^{-1}(c)$ whenever $c$ is a regular value. It follows that $f^{-1}(c)/S^1$ is a symplectic manifold.

    Consider the map $\Phi:f^{-1}(c) \to S^2_{c}$ that maps $(\rho_1,\rho_2)$ to $\rho_1+\rho_2$. Since $c \in (r_1-r_2,r_1+r_2)$, this map is surjective. Moreover, by the law of cosine, for any $\rho \in S^2_c$, $\Phi^{-1}(\rho)$ is a single $S^1$-orbit.
     We prove that $\Phi^*( \frac{1}{c^2}\omega) = \iota^* (\frac{1}{r_1^2}\omega \oplus \frac{1}{r_2^2} \omega)$, where $\iota: f^{-1}(c) \hookrightarrow M$ is the inclusion.  
     Let $e_1,e_2,e_3$ be the standard basis of $\R^3$ and define $V_i:= (\rho_1 \times e_i,\rho_2 \times e_i)$ for $i=1,2,3$. It is straightforward to check that $V_1,V_2,V_3$ form a basis of $T_{(\rho_1,\rho_2)}f^{-1}(c)$.
    
     \begin{align*}
        (\Phi^*\frac{1}{c^2}\omega)_{(\rho_1,\rho_2)}(V_i,V_j) &=\frac{\rho_1 +\rho_2}{c^2} \cdot [((\rho_1+\rho_2) \times e_i ) \times ((\rho_1+\rho_2) \times e_j)]\\ 
        &= \frac{|\rho_1+\rho_2|^2}{c^2} (\rho_1+\rho_2) \cdot (e_i \times e_j)\\
        & = (\rho_1+\rho_2) \cdot (e_i \times e_j).
     \end{align*}
     On the other hand,
     \begin{align*}
         \iota^*\left(\frac{1}{r_1^2}\omega \oplus \frac{1}{r_2^2} \omega\right)_{(\rho_1,\rho_2)}(V_i,V_j) &=\frac{\rho_1}{r_1^2} \cdot [(\rho_1 \times e_i) \times (\rho_1 \times e_j)]+\frac{\rho_2}{r_2^2} \cdot [(\rho_2 \times e_i) \times (\rho_2 \times e_j)] \\
         &= \frac{|\rho_1|^2}{r_1^2} \rho_1 \cdot(e_i \times e_j) +\frac{|\rho_2|^2}{r_2^2} \rho_2 \cdot(e_i \times e_j) \\
         & = (\rho_1+\rho_2) \cdot (e_i \times e_j).
     \end{align*}
     Therefore, by the symplectic reduction Theorem~\cite{Meyer73,MW74}, when $c$ is a regular value, $f^{-1}(c)/S^1$ is symplectomorphic to $(S^2_c, \frac{1}{c^2}\omega)$.

     Now assume that $c= r_1\pm r_2$. In this case, $X_f(\rho_1,\rho_2) =0$ for any $(\rho_1,\rho_2) \in f^{-1}(c)$, so every point in $f^{-1}(c)$ is a fixed point. It follows that $f^{-1}(c)/S^1 = f^{-1}(c)$. The map $\Psi: S^2_c \to f^{-1}(c)$ sending $\rho$ to $(\frac{r_1}{c}\rho, \pm \frac{r_2}{c}\rho)$ is the inverse of the map $\Phi$ defined above. Moreover, for any $v, w \in T_\rho S^2_c$, we have that 
     
     \begin{align*}
         \Psi^*\left(\frac{1}{r_1^2}\omega \oplus \frac{1}{r_2^2} \omega\right)_{\rho}(v,w) &=\frac{1}{r_1^2} \frac{r_1}{c} \rho \cdot \bigg(\frac{r_1}{c}v \times \frac{r_1}{c}w \bigg) +\frac{1}{r_2^2} \frac{\pm r_2}{c} \rho \cdot \bigg(\frac{\pm r_2}{c}v \times \frac{\pm r_2}{c}w\bigg) \\
         &= \frac{r_1}{c^3} \rho \cdot (v \times w) \pm  \frac{r_2}{c^3} \rho \cdot (v \times w)   \\
         & =  \frac{r_1 \pm r_2}{c^3} \rho \cdot (v \times w) \\
         & = \frac{1}{c^2} \rho \cdot (v \times w) \\
         & =  \frac{1}{c^2} \omega_\rho(v,w).
     \end{align*}
     Hence, $\Psi$ is indeed a symplectomorphism. The claim follows.
\end{proof}

\begin{proposition}\label{prop:reduction}
    Fix positive numbers $r_1,\ldots,r_n$ such that $r_1 \neq r_2$ and $\mathcal{P}(r_1,r_2,\ldots,r_n)$ is a smooth manifold. Fix $|r_1-r_2| \leq c \leq r_1+r_2$ such that the reduced space $\ell_{12}^{-1}(c)/S^1$ is a smooth manifold. Then $\ell_{12}^{-1}(c)/S^1$ is symplectomorphic to $\mathcal{P}(c,r_3,r_4,\ldots,r_n)$
\end{proposition}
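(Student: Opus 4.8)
The plan is to prove this by \emph{reduction in stages}, exploiting the fact that the bending flow and the diagonal $SO(3)$-action arise from two commuting Hamiltonian group actions on the product of spheres $\widetilde{M}=S^2_{r_1}\times\cdots\times S^2_{r_n}$. Since $r_1\neq r_2$, after relabeling we may assume $r_1>r_2$, so that Lemma~\ref{lem:reduction} applies to the first two factors.

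First I would lift the bending flow to $\widetilde{M}$: the function $f(\rho_1,\ldots,\rho_n)=|\rho_1+\rho_2|$ generates a Hamiltonian $S^1$-action that rotates $\rho_1,\rho_2$ about their sum and fixes the remaining vectors, and whose $SO(3)$-reduction is exactly $\ell_{12}$. The key structural observation is that this $S^1$-action commutes with the diagonal $SO(3)$-action: rotating $\rho_1,\rho_2$ about the axis $g(\rho_1+\rho_2)$ by angle $\theta$ equals $g$ conjugating the rotation about $\rho_1+\rho_2$, so the two flows commute (equivalently, $f$ Poisson-commutes with each component of $\mu$). Thus $(\mu,f)$ is the moment map of a Hamiltonian $SO(3)\times S^1$-action on $\widetilde{M}$, and I may reduce in either order.

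Reducing first by $SO(3)$ at level $0$ and then by $S^1$ at level $c$ produces precisely the left-hand side $\ell_{12}^{-1}(c)/S^1$, since $f$ descends to $\ell_{12}$. Reducing in the opposite order is where the computation pays off. Because the $S^1$-action only touches the first two factors, $f^{-1}(c)/S^1$ splits as $\big(g^{-1}(c)/S^1\big)\times S^2_{r_3}\times\cdots\times S^2_{r_n}$ with $g=|\rho_1+\rho_2|$ on $S^2_{r_1}\times S^2_{r_2}$; by Lemma~\ref{lem:reduction} the first factor is symplectomorphic to $(S^2_c,\tfrac{1}{c^2}\omega)$ via $\Phi(\rho_1,\rho_2)=\rho_1+\rho_2$. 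Hence $f^{-1}(c)/S^1$ is symplectomorphic to the product of spheres $\widetilde{M}'=S^2_c\times S^2_{r_3}\times\cdots\times S^2_{r_n}$ associated to the side lengths $c,r_3,\ldots,r_n$. I would then check that $\Phi$ is $SO(3)$-equivariant (immediate from $\Phi(g\rho_1,g\rho_2)=g\Phi(\rho_1,\rho_2)$) and that, under this identification, the residual $SO(3)$-moment map becomes $\sigma+\rho_3+\cdots+\rho_n$ with $\sigma\in S^2_c$, which is exactly the moment map $\mu'$ defining $\mathcal{P}(c,r_3,\ldots,r_n)$. Reducing $\widetilde{M}'$ by this action at level $0$ therefore yields $\mathcal{P}(c,r_3,\ldots,r_n)$, the right-hand side.

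Since the two orders of reduction give symplectomorphic results, the proposition follows. The main obstacle is bookkeeping rather than conceptual: I must verify the $SO(3)$-equivariance of $\Phi$ together with the precise matching of the descended moment maps, and I must ensure the relevant actions are free (or at least that the quotients are smooth) so that the honest symplectic reduction theorem of~\cite{Meyer73,MW74} applies in stages. The hypotheses that $\mathcal{P}(r_1,\ldots,r_n)$ and $\ell_{12}^{-1}(c)/S^1$ are smooth manifolds---together with genericity ruling out the collinear configurations on which $SO(3)$ fails to act freely---let me restrict to the smooth locus and invoke reduction in stages there, while the boundary cases $c=r_1\pm r_2$ are covered by the fixed-point analysis already carried out in Lemma~\ref{lem:reduction}.
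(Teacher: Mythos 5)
Your proposal is correct and follows essentially the same route as the paper: lift the bending Hamiltonian to $\widetilde{M}=S^2_{r_1}\times\cdots\times S^2_{r_n}$, observe that the resulting $S^1$-action commutes with the diagonal $SO(3)$-action, perform symplectic reduction in stages, and invoke Lemma~\ref{lem:reduction} to identify the $S^1$-reduction of the first two sphere factors with $(S^2_c,\tfrac{1}{c^2}\omega)$. The only difference is that you spell out the $SO(3)$-equivariance of $\Phi$ and the matching of the descended moment maps, details the paper leaves implicit.
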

\begin{proof}
    Let $\Tilde M: = S^2_{r_1} \times \cdots \times S^2_{r_n}$ be the product of symplectic $2$-spheres of radii $r_1,\ldots,r_n$ and let $\mu: \Tilde M \to \R^3$ be the moment map of the diagonal Hamiltonian $SO(3)$-action on $\Tilde{M}$. Under this setting, $\mathcal{P}(r_1,\ldots,r_n) = \Tilde M\sslash_{0} SO(3) = \mu^{-1}(0)/SO(3)$.
    Notice that there is a Hamiltonian $S^1$-action on $\Tilde{M}$ with moment map $\Tilde{\ell_{12}}(\rho) = |\rho_1+\rho_2|$, and this $S^1$-action commutes with the $SO(3)$-action on $\Tilde{M}$. Hence, we can identify $\ell_{12}^{-1}(c)/S^1$ with $F^{-1}(c,0)/(S^1 \times SO(3))$, where $F = (\Tilde{\ell_{12}}, \mu): \Tilde M \to \R \times \R^3$. It follows that we can do symplectic reductions by stages: first by the $S^1$-action and then by the $SO(3)$-action. In other words, $\ell_{12}^{-1}(c)/S^1 = (\Tilde M \sslash_cS^1)\sslash_0 SO(3)$.

    By Lemma~\ref{lem:reduction}, $\Tilde{\ell_{12}}^{-1}(c)/S^1 \cong S^2_c \times S^2_{r_3} \times \cdots \times S^2_{r_n}$. Hence, by definition of polygon spaces, $\ell_{12}^{-1}(c)/S^1 = (\Tilde M \sslash_cS^1)\sslash_0 SO(3) = (S^2_c \times S^2_{r_3} \times \cdots \times S^2_{r_n})\sslash_0 SO(3) = \mathcal{P}(c,r_3,\ldots,r_n) .$
\end{proof}

Next, we consider the bending flows associated to different index sets. Proposition 2.1.2 in~\cite{Kl94} (or ~\cite[Lemma 2.1]{HT03}) shows that $\{\ell_I^2,\ell_J^2\}=0$ if $I, J$ are disjoint or one is contained in another. Moreover, note that for any $I \subseteq \{1,\ldots,n\}$, $\ell_I = \ell_{I^c}$, where $I^c$ is the complement of $I$ in $\{1,\ldots,n\}$. Together, these imply the following lemma.

\begin{lemma}\label{lem:poisson-commuting}
    Let $I,J$ be nonempty subsets of $\{1,\ldots,n\}$. If one of the following holds: 
    \begin{itemize}
        \item $I, J$ are disjoint, or
        \item $I^c, J^c$ are disjoint, or
        \item one of $I,J$ is contained in another,
    \end{itemize}
    then $\{\ell_I^2,\ell_J^2\} = 0$. Moreover, if $\ell_I,\ell_J$ never vanish on $\mathcal{P}(r_1,\ldots,r_n)$, then $\{\ell_I,\ell_J\} = 0$.
\end{lemma}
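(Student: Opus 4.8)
The plan is to derive all three cases of the first assertion from the result cited just above the lemma (Proposition 2.1.2 of \cite{Kl94}), which already gives $\{\ell_I^2,\ell_J^2\}=0$ whenever $I,J$ are disjoint or one is nested in the other, together with the identity $\ell_I=\ell_{I^c}$ recorded in the preceding paragraph.

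Two of the three hypotheses are immediate: if $I,J$ are disjoint, or if one of $I,J$ contains the other, the cited proposition applies verbatim and gives $\{\ell_I^2,\ell_J^2\}=0$. For the remaining case, where $I^c$ and $J^c$ are disjoint, I would first invoke the identity $\ell_I=\ell_{I^c}$, and hence $\ell_I^2=\ell_{I^c}^2$ and $\ell_J^2=\ell_{J^c}^2$ as functions on $\mathcal{P}(r_1,\ldots,r_n)$. Therefore $\{\ell_I^2,\ell_J^2\}=\{\ell_{I^c}^2,\ell_{J^c}^2\}$, and since $I^c,J^c$ are disjoint the cited proposition shows this equals $0$. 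This settles the first assertion in all three cases.

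For the ``Moreover'' statement, I would use that the Poisson bracket is a derivation in each slot. Applying the Leibniz rule twice gives the identity $\{\ell_I^2,\ell_J^2\}=4\,\ell_I\ell_J\,\{\ell_I,\ell_J\}$ on the open set where $\ell_I$ and $\ell_J$ are smooth, namely where neither vanishes. By the first assertion the left-hand side is identically $0$, so $\ell_I\ell_J\,\{\ell_I,\ell_J\}\equiv 0$. Under the hypothesis that $\ell_I$ and $\ell_J$ never vanish on $\mathcal{P}(r_1,\ldots,r_n)$, the function $\ell_I\ell_J$ is nowhere zero and $\ell_I,\ell_J$ are globally smooth, so dividing yields $\{\ell_I,\ell_J\}=0$ everywhere.

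This argument is essentially bookkeeping, so I do not anticipate a serious obstacle. The two points that warrant a moment's care are the reduction of the complement case to the disjoint case through $\ell_I=\ell_{I^c}$, which is exactly why passing to squares is convenient, and the fact that $\ell_I=\sqrt{\ell_I^2}$ fails to be smooth precisely on its zero locus. The non-vanishing hypothesis in the second assertion is what guarantees that $\ell_I,\ell_J$ are smooth so that the Leibniz expansion makes sense, and simultaneously that the final division by $\ell_I\ell_J$ is legitimate.
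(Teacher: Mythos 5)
Your proposal is correct and follows essentially the same route as the paper, which derives the lemma directly from Proposition 2.1.2 of \cite{Kl94} together with the identity $\ell_I=\ell_{I^c}$, exactly as you do. The Leibniz-rule argument for the ``moreover'' part, with the non-vanishing hypothesis guaranteeing both smoothness of $\ell_I,\ell_J$ and the legitimacy of dividing by $\ell_I\ell_J$, is the intended (and standard) completion of the paper's one-line justification.
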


Next, we establish a general criterion characterizing the singular points of $H_t$ that will be used in our semitoric families.

\begin{lemma}\label{lem:singular}
    Fix $p \in \mathcal{P}(r_1,\ldots,r_n)$ and choose a representative $\rho \in  \widetilde{\mathcal{P}}(r_1,\ldots,r_n)$ of $p$ with $\rho_1 =(r_1,0,0)$. For any $t \in[0,1]$, $p$ is a singular point of $H_t := t\ell_{n-2,n-1}^2+(1-t)\ell^2_{n-1,n}$ if and only if there exists $a \in \R$ such that 
    \begin{itemize}
        \item $(a,0,0) \times \rho_i =0$ for $i \neq n-2,n-1,n$;
        \item $(a,0,0) \times \rho_{n-2} = t \rho_{n-1} \times \rho_{n-2}$; and
        \item $(a,0,0) \times \rho_n =(1-t)  \rho_{n-1} \times \rho_n$.
    \end{itemize}
\end{lemma}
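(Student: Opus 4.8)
The plan is to identify critical points of $H_t$ on the reduced space $\mathcal P(r_1,\ldots,r_n) = \mu^{-1}(0)/SO(3)$ with constrained critical points of the lifted function on $\widetilde M = S^2_{r_1}\times\cdots\times S^2_{r_n}$, and then translate the resulting Lagrange-multiplier condition into the vector equations in the statement. Let $\widetilde H_t := t\,\widetilde\ell_{n-2,n-1}^{\,2} + (1-t)\,\widetilde\ell_{n-1,n}^{\,2}$ on $\widetilde M$, where $\widetilde\ell_I(\rho) = |\sum_{i\in I}\rho_i|$; this function is $SO(3)$-invariant and descends to $H_t$. Because $\widetilde H_t$ is $SO(3)$-invariant, its differential already annihilates the orbit directions, so by the standard correspondence for symplectic quotients $[\rho]$ is a critical point of $H_t$ precisely when $d\widetilde H_t|_\rho$ vanishes on $T_\rho\mu^{-1}(0) = \ker d\mu_\rho$. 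The annihilator of $\ker d\mu_\rho$ is the image of $(d\mu_\rho)^*$, spanned by the covectors $d\mu^\xi|_\rho$ with $\mu^\xi := \xi\cdot\mu$ and $\xi\in\R^3$; hence criticality is equivalent to $d\widetilde H_t|_\rho = d\mu^\xi|_\rho$ for some $\xi$, and since $\omega$ is nondegenerate and $\mu$ is the moment map, to
\[
X_{\widetilde H_t}(\rho) = X_\xi(\rho) = (\xi\times\rho_1,\ldots,\xi\times\rho_n) \quad\text{for some } \xi\in\R^3 .
\]

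Next I would compute $X_{\widetilde H_t}$ explicitly. Expanding $\widetilde\ell_{ij}^{\,2} = r_i^2 + r_j^2 + 2\rho_i\cdot\rho_j$, only the terms $2\rho_i\cdot\rho_j$ contribute to the Hamiltonian vector field, and using the computation already in the proof of Lemma~\ref{lem:reduction} — that on $(S^2_r,\tfrac{1}{r^2}\omega)$ the Hamiltonian vector field of $\rho\mapsto b\cdot\rho$ is $b\times\rho$ — the vector field of $\rho_i\cdot\rho_j$ on the product has component $\rho_j\times\rho_i$ in slot $i$ and $\rho_i\times\rho_j$ in slot $j$, zero elsewhere. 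Assembling these, $X_{\widetilde H_t}(\rho)$ is nonzero only in slots $n-2,n-1,n$, equal respectively to $2t(\rho_{n-1}\times\rho_{n-2})$, $2t(\rho_{n-2}\times\rho_{n-1}) + 2(1-t)(\rho_n\times\rho_{n-1})$, and $2(1-t)(\rho_{n-1}\times\rho_n)$. Matching against $X_\xi(\rho)$ slot by slot yields $\xi\times\rho_i = 0$ for $i\neq n-2,n-1,n$ together with one equation in each of slots $n-2$, $n-1$, $n$. Using the normalization $\rho_1 = (r_1,0,0)$ with $r_1>0$ and $n\geq 4$ (so slot $1$ lies outside $\{n-2,n-1,n\}$), the equation $\xi\times\rho_1=0$ forces $\xi$ parallel to $(1,0,0)$; writing $\xi=(2a,0,0)$ absorbs the factor $2$ and turns the slot-$(n-2)$ and slot-$n$ equations into the two displayed equations and the conditions $\xi\times\rho_i=0$ into $(a,0,0)\times\rho_i=0$.

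It remains to see that the slot-$(n-1)$ equation is automatic and hence may be omitted from the list; I expect this to be the main (and only genuinely non-formal) point, and it is exactly where the constraint $\sum_i\rho_i = 0$, i.e. $\rho\in\mu^{-1}(0)$, is essential. From $\sum_i\rho_i = 0$ one has $\xi\times\rho_{n-1} = -\sum_{i\neq n-1}\xi\times\rho_i$; the terms with $i\neq n-2,n-1,n$ vanish by the established conditions, and substituting the slot-$(n-2)$ and slot-$n$ equations collapses the right-hand side to $2t(\rho_{n-2}\times\rho_{n-1}) + 2(1-t)(\rho_n\times\rho_{n-1})$, which is precisely the slot-$(n-1)$ component of $X_{\widetilde H_t}$. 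Thus the slot-$(n-1)$ equation is redundant. Reading the chain of equivalences in both directions — with $\xi=(2a,0,0)$ in the backward direction and $a=\xi_1/2$ in the forward direction — then gives exactly the three stated conditions, completing the proof.
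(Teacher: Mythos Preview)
Your proof is correct and follows essentially the same approach as the paper: both arguments compute the Hamiltonian vector field of $H_t$ componentwise and identify criticality of $[\rho]$ with the condition that this vector field agree with an infinitesimal $SO(3)$-rotation $(\xi\times\rho_1,\ldots,\xi\times\rho_n)$, then use $\rho_1=(r_1,0,0)$ to force $\xi$ along the $x$-axis. The paper phrases this as ``$X_{H_t}(\rho)$ lies in the vertical subspace $V_\rho\subset T_\rho\widetilde{\mathcal P}$'' rather than as a Lagrange-multiplier condition on $\widetilde M$, and it simply omits the slot-$(n-1)$ equation without comment; your explicit verification that this equation is redundant via $\sum_i\rho_i=0$ fills in a step the paper leaves implicit.
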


\begin{proof}
    By~\cite[Lemma 3.5]{KM96}, the Hamiltonian vector field $X_{H_t}$ of $H_t$ on $\widetilde{\mathcal{P}}(r_1,\ldots,r_n)$ is given by 
    \[X_{H_t}(\rho) = (0,\ldots,0, t \rho_{n-1} \times \rho_{n-2}, t \rho_{n-2} \times \rho_{n-1}+ (1-t) \rho_n \times \rho_{n-1},  (1-t) \rho_{n-1} \times \rho_{n} ).\]

    Since the tangent space $T_p\mathcal{P}(r_1,\ldots,r_n)$ is the quotient of $T_\rho\widetilde{\mathcal{P}}(r_1,\ldots,r_n)$ by the vertical space $V_\rho$ generated by the infinitesimal vector fields of the $SO(3)$-action, $p$ is a singular point of $H_t$ if and only if $X_{H_t}(\rho) \in V_\rho$. It is straightforward to compute that 
    \[V_\rho = \{((x,y,z)\times\rho_1, \ldots,(x,y,z) \times \rho_n): x,y,z \in \R\}.\] 
    Hence, $X_{H_t} \in V_\rho$ if and only if there exist $a,b,c \in \R$ such that 
   $(a,b,c) \times \rho_i = 0 \textrm{ for } 1 \leq i \leq n-3$, $(a,b,c) \times \rho_{n-2} = t \rho_{n-1} \times \rho_{n-2}$ and $(a,b,c) \times \rho_n =(1-t)  \rho_{n-1} \times \rho_n$. Since $\rho_1 = (r_1,0,0)$, this implies that $b=c=0$. The result follows immediately.
\end{proof}

\begin{lemma}\label{lem:fixed-components}
Let $r_1,\dots,r_5$ be positive numbers such that the polygon space
$M := \mathcal{P}(r_1,r_2,r_3,r_4,r_5)$ is a smooth manifold, and let $J=\ell_{12}:M\to\mathbb{R}$. Let $P,Q$ be pentagons defined in section~\ref{sec:intro}.
Assume that the side lengths satisfy the inequalities of one of the cases $(P)$, $(Q)$ or $(P+Q)$ in Theorem~\ref{thm:main}.
Then $J$ generates a semi-free Hamiltonian $S^1$-action on $M$, and the fixed-point set of this action always contains exactly two connected fixed surfaces $J^{-1}(|r_1-r_2|)$ and $J^{-1}(r_1+r_2)$.

In addition, depending on which inequalities hold, the action also has isolated fixed points.
\begin{itemize}
    \item In case $(P)$, the set of isolated fixed points is $\{P\}$.
    \item In case $(Q)$, the set of isolated fixed points is $\{Q\}$.
    \item In case $(P+Q)$, the set of isolated fixed points is $\{P,Q\}$.
\end{itemize}
\end{lemma}

\begin{figure}[ht!]
\centering
\begin{subfigure}{0.9\textwidth}
\centering
    \begin{tikzpicture}[scale=0.3]
    \draw[->] (-1,0) -- (9,0); 
    \draw[->] (-1,0) -- (-1,10); 

    \node[below] at (0,0) {$J_{\min}$};
    \node[below] at (4,0) {$j_P$};
    \node[below] at (8,0) {$J_{\max}$};

    \node[left] at (-1,1) {$r_5-r_4$};
    \node[left] at (-1,4) {$r_3-J_{\min}$};
    \node[left] at (-1,9) {$r_4+r_5$};

    \draw (0,9) -- (8,9) -- (8,1) -- (4,1) -- (0,4) -- (0,9);
    
    \draw[->] (20,0) -- (30,0); 
    \draw[->] (20,0) -- (20,10); 

    \node[below] at (21,0) {$J_{\min}$};
    \node[below] at (25,0) {$j_P$};
    \node[below] at (29,0) {$J_{\max}$};

    \node[left] at (20,1) {$r_3-r_4$};
    \node[left] at (20,6) {$r_5+J_{\min}$};
    \node[left] at (20,9) {$r_3+r_4$};
    
    \draw (21,1) -- (29,1) -- (29,9) -- (25,9) -- (21,6) -- (21,1);
\end{tikzpicture}
\caption{Case (P)}
\label{fig:P-image}
\end{subfigure}\\
\begin{subfigure}{0.9\textwidth}
\centering
    \begin{tikzpicture}[scale=0.3]
    \draw[->] (-1,0) -- (9,0); 
    \draw[->] (-1,0) -- (-1,10); 

    \node[below] at (0,0) {$J_{\min}$};
    \node[below] at (4,0) {$j_Q$};
    \node[below] at (8,0) {$J_{\max}$};

    \node[left] at (-1,1) {$r_5-r_4$};
    \node[left] at (-1,3) {$r_3+J_{\min}$};
    \node[left] at (-1,9) {$r_4+r_5$};

    \draw (0,1) -- (8,1) -- (8,9) -- (4,9) -- (0,3) -- (0,1);
    
    \draw[->] (20,0) -- (30,0); 
    \draw[->] (20,0) -- (20,10); 

    \node[below] at (21,0) {$J_{\min}$};
    \node[below] at (25,0) {$j_Q$};
    \node[below] at (29,0) {$J_{\max}$};

    \node[left] at (20,1) {$r_3-r_4$};
    \node[left] at (20,7) {$r_5-J_{\min}$};
    \node[left] at (20,9) {$r_3+r_4$};

    \draw (21,9) -- (29,9) -- (29,1) -- (25,1) -- (21,7) -- (21,9);
\end{tikzpicture}
\caption{Case (Q)}
\end{subfigure}\\
\begin{subfigure}{0.9\textwidth}
\centering
    \begin{tikzpicture}[scale=0.3]
     \draw[->] (-1,0) -- (9,0); 
    \draw[->] (-1,0) -- (-1,10); 

    \node[below] at (0,0) {$J_{\min}$};
    \node[below] at (2,0) {$j_P$};
    \node[below] at (4,0) {$j_Q$};
    \node[below] at (8,0) {$J_{\max}$};

    \node[left] at (-1,1) {$r_5-r_4$};
    \node[left] at (-1,5) {$r_3-J_{\min}$};
    \node[left] at (-1,7) {$r_3+J_{\min}$};
    \node[left] at (-1,9) {$r_4+r_5$};

    \draw (2,9) -- (8,9) -- (8,1) -- (4,1) -- (0,5) -- (0,7) -- (2,9);
    
    \draw[->] (20,0) -- (30,0); 
    \draw[->] (20,0) -- (20,10); 

    \node[below] at (21,0) {$J_{\min}$};
    \node[below] at (25,0) {$j_Q$};
    \node[below] at (23,0) {$j_P$};
    \node[below] at (29,0) {$J_{\max}$};

    \node[left] at (20,1) {$r_3-r_4$};
    \node[left] at (20,3) {$r_5-J_{\min}$};
    \node[left] at (20,5) {$r_5+J_{\min}$};
    \node[left] at (20,9) {$r_3+r_4$};

    \draw (23,1) -- (29,1) -- (29,9) -- (25,9) -- (21,5) -- (21,3) -- (23,1);
\end{tikzpicture}
\caption{Case (P+Q), where $r_3>r_5$}
\end{subfigure}
\caption{Each subfigure contains two moment images of $\mathcal{P}(r_1,\ldots,r_5)$ with moment map $(\ell_{12},\ell_{45})$ on the left and moment map $(\ell_{12},\ell_{34})$ on the right, where $r_1,\ldots,r_5$ satisfy inequalities in the corresponding case. In all figures, $J_{\min} = |r_1-r_2|$, $J_{\max} = r_1+r_2, \  j_P=r_3+r_4-r_5, \ j_Q = -r_3+r_4+r_5$.}
\label{fig:moment-image}
\end{figure}
\begin{proof}
    The proof of all three cases are nearly identical, so we will only prove the first case. Since in all cases, the inequalities imply $r_1 \neq r_2, r_3 \neq r_4, r_4 \neq r_5$, so by Lemma~\ref{lem:poisson-commuting} and \cite[Proposition 3.11]{KM96}, $(\ell_{12},\ell_{45})$ or $(\ell_{12},\ell_{34})$ is the moment map of an effective Hamiltonian $T^2$-action on $M$. It is straightforward to check that the Delzant polygons corresponding to these toric actions are given by Figure~\ref{fig:P-image}. Since $(\ell_{12},\ell_{34})(P) = (j_P,r_3+r_4)$, the claims about fixed components follow directly from Delzant's classification~\cite{De1988} and the local normal form theorem~\cite{GS84}. 
\end{proof}

\section{Semitoric Systems}\label{sec:semitoric}
In this section, we recall the definition of semitoric systems and the classification of non-degenerate singularities. The non-degeneracy of singularities can be defined in any dimension for any integrable system. Since our main interest lies in dimension 4, we will follow Section 2.3 in~\cite{LFP-fam2} to give the equivalent definition using Hessian matrices. For the definition in any dimension using Cartan subalgebras, we refer the readers to~\cite[Definition 1.23]{Bolsinov-Fomenko}.

\subsection{Singularities of $4$-dimensional integrable systems}
Let $(M,\omega)$ be a $4$-dimensional symplectic manifold. Recall that an integrable system on $(M,\omega)$ is a pair of functions $F=(J,H): M \to \R^2$ such that $dJ,dH$ are linearly independent almost everywhere and $\{J,H\}=0$. We say a point $p \in M$ is {\bf singular} if $dJ_m$ and $dH_m$ are linearly dependent; the {\bf rank} of the singular point $p$ is the dimension of the vector space spanned by $dJ_m,dH_m$.  

\subsubsection*{Rank zero points}
Let $p \in M$ be a rank zero singular point and fix a basis of $T_pM$. Let $\Omega_p$ be the matrix of the symplectic form $\omega_p$ and $\Hess(J)_p,\ \Hess(H)_p$ be the Hessian matrices under this basis. For any $\nu,\mu \in \R$, the characteristic polynomial of the matrix $A_{\nu,\mu}:= \Omega_p^{-1}(\nu\Hess(J)_p+\mu\Hess(H)_p)$ is of the form $X \mapsto \chi_{\nu,\mu}(X^2)$ for some quadratic polynomial $\chi_{\nu,\mu}$ (c.f. Proposition 1.2 and the discussion below Theorem 1.3 in~\cite{Bolsinov-Fomenko}). 

\begin{definition}\label{def:rank-0}
    Let $p \in M$ be a rank zero singular point of the integrable system $(J,H)$ on a 4-dimensional symplectic manifold $(M,\omega)$. $p$ is {\bf non-degenerate} if and only if there exist $\nu,\mu \in \R$ such that $\chi_{\nu,\mu}$ has two distinct nonzero roots $\lambda_1,\lambda_2 \in \C$. In this case, the type of $p$ is 
    \begin{itemize}
        \item \textit{elliptic-elliptic} if $\lambda_1,\lambda_2<0$;
        \item \textit{elliptic-hyperbolic} if $\lambda_1<0,\lambda_2>0$; 
        \item \textit{hyperbolic-hyperbolic} if $\lambda_1,\lambda_2>0$; and
        \item \textit{focus-focus} if both $\lambda_1,\lambda_2$ are not purely real numbers, i.e. $\mathcal{I}(\lambda_1) \neq 0$ and $\mathcal{I}(\lambda_2) \neq 0$.
    \end{itemize}
    
\end{definition}

\subsubsection*{Rank one points}
Let $p \in M$ be a rank one singular point. In this case, $dJ_p$ and $dH_p$ are linearly dependent, so the Hamiltonian vector fields $X_J(p),X_H(p)$ are linearly dependent as well because of the non-degeneracy of $\omega$. Let $L$ be the real span of $X_J(p),X_H(p)$ and $L^\omega$ be the symplectic orthogonal of $L$. Then $A_{\nu,\mu}:= \Omega_p^{-1}(\nu\Hess(J)_p+\mu\Hess(H)_p)$ descends to an operator, by slight abuse of notation, still called $A_{\nu,\mu}$ on the 2-dimensional quotient $L^\omega/L$, and the eigenvalues of $A_{\nu,\mu}$ are of the form $\pm \lambda$ for $\lambda \in \C$ (again, see Proposition 1.2 in~\cite{Bolsinov-Fomenko}). 

\begin{definition}
    Let $p \in M$ be a rank one singular point of the integrable system $(J,H)$ on a 4-dimensional symplectic manifold $(M,\omega)$. $p$ is {\bf non-degenerate} if and only if $A_{\nu,\mu}$ is an isomorphism on the quotient space $L^\omega/L$. In this case, the type of $p$ is 
    \begin{itemize}
        \item \textit{elliptic-regular} if $A_{\nu,\mu}$ has eigenvalues $\pm i\alpha$ for some $\alpha \in \R \setminus \{0\}$;
        \item \textit{hyperbolic-regular} if $A_{\nu,\mu}$ has eigenvalues $\pm \alpha$ for some $\alpha \in \R \setminus \{0\}$.
    \end{itemize}
    
\end{definition}

Now, we are ready to define semitoric systems.
\begin{definition}\label{def:semitoric}
    A {\bf semitoric system} is a 4-dimensional integrable system $(M,\omega,F=(J,H))$ such that 
    \begin{enumerate}
        \item $J$ is proper;
        \item $J$ is the moment map for an effective Hamiltonian $S^1$-action; and
        \item all singularities of $F$ are non-degenerate without hyperbolic types.
    \end{enumerate}
\end{definition}
Notice that (3) is equivalent to saying that all rank zero singular points are of either elliptic-elliptic or focus-focus type, and all rank one singular points are of elliptic-regular type.
\subsection{Local normal forms of singularities of a semitoric system}
In this section, we recall the local theory of singularities of a semitoric system. The best-known local theory is proposed by Eliasson~\cite{Eli84}: it is a symplectic Morse lemma for \textit{any} integrable system. However, since we are studying semitoric systems only, the local normal form given by the Hamiltonian $S^1$-action comes in handy, so we will use the following lemma. We refer interested readers to Section 2.2 in~\cite{LFP-fam2} for a discussion of Eliasson's normal form and the references therein.

\subsubsection{Rank zero points} We first discuss the local forms around rank zero points. Notice that these points are all fixed by the $S^1$-action.

\begin{lemma}\label{lem:LFP}\cite[Proposition 7.5]{LFP-fam2}
    Let $J = \epsilon (\frac{1}{2}|z_1|^2 - \frac{1}{2}|z_2|^2)$ with $\epsilon \in \{-1,1\}$. Then $H \in \mathcal{C}^\infty(\C^2,\R)$ is a Hamiltonian such that $\{J, H\}=0$, $H(0) =0$ and $dH(0)=0$ if and only if there exist $\mu_1,\mu_2,\mu_3,\psi \in \R$ and $R_3 \in \mathcal{C}^\infty(\C^2,\R)$ with $R_3(z_1,z_2) = O(3)$ and $\{J,R_3\}=0$ such that 
    \[H(z_1,z_2) = \mu_1 \mathfrak{R}(e^{i\psi}z_1z_2) +\mu_2|z_1|^2+\mu_3|z_2|^2 +R_3(z_1,z_2).\]
    Moreover, if $H$ is of this form and if $(\mu_1,\mu_2+\mu_3) \neq (0,0)$, then $(J,H)$ is integrable. In this case, the singular point $(0,0)$ of $(q_1,H)$ is 
    \begin{itemize}
        \item of focus-focus type if $|\mu_2+\mu_3| < |\mu_1|$;
        \item of elliptic-elliptic type if $|\mu_2+\mu_3| > |\mu_1|$; and
        \item degenerate if $|\mu_2+\mu_3| = |\mu_1|$.
    \end{itemize}
    In this statement $O(3)$ means $O(||z_1,z_2||^3)$; we will use this notation throughout the paper.
\end{lemma}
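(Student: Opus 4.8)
The plan is to split the statement into three independent pieces: the normal-form equivalence, the type of the origin, and integrability. Throughout I identify $\C^2 \cong \R^4$ with coordinates $z_j = x_j + i y_j$ and symplectic form $\omega = dx_1\wedge dy_1 + dx_2\wedge dy_2$, and I first record that the Hamiltonian flow of $J = \epsilon(\frac12|z_1|^2 - \frac12|z_2|^2)$ is $(z_1,z_2)\mapsto (e^{-i\epsilon t}z_1, e^{i\epsilon t}z_2)$. Thus $\{J,H\}=0$ is equivalent to $H$ being invariant under the linear $S^1$-action $(z_1,z_2)\mapsto(e^{-i\theta}z_1, e^{i\theta}z_2)$, which is the fact I will use repeatedly.

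\textbf{Normal form.} The reverse implication is immediate: each of $|z_1|^2,\ |z_2|^2,\ \mathfrak{R}(e^{i\psi}z_1z_2)$ is $S^1$-invariant, so any $H$ of the stated form Poisson-commutes with $J$, and since every term is at least quadratic one has $H(0)=0$ and $dH(0)=0$. For the forward implication I would write $H = H_2 + R_3$, where $H_2$ is the Hessian quadratic form at $0$ and $R_3 = O(3)$ (legitimate because $H(0)=0$, $dH(0)=0$). Since the $S^1$-action is \emph{linear} it preserves the polynomial grading, so the degree-$2$ part $H_2$ of the invariant function $H$ is itself invariant, and hence so is $R_3 = H - H_2$, giving $\{J,R_3\}=0$. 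It then remains to classify invariant real quadratic forms: reading off the weights shows the only invariant degree-$2$ monomials are $|z_1|^2,\ |z_2|^2,\ z_1z_2,\ \overline{z_1z_2}$, so $H_2 = \mu_2|z_1|^2 + \mu_3|z_2|^2 + \mathfrak{R}(c\,z_1z_2)$ for some $c\in\C$; writing $c = \mu_1 e^{i\psi}$ in polar form yields the claim.

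\textbf{Type at the origin.} After the symplectic rotation $z_1\mapsto e^{-i\psi}z_1$, which fixes $J$ and replaces $\mathfrak{R}(e^{i\psi}z_1z_2)$ by $\mathfrak{R}(z_1z_2)$, I may assume $\psi=0$. The crux is the spectrum of $A_{\nu,\mu} = \Omega_0^{-1}(\nu\Hess(J)_0 + \mu\Hess(H)_0)$, which is the linearization at $0$ of the quadratic Hamiltonian $\nu J + \mu H_2$; this again has normal form, with parameters $\tilde\mu_1 = \mu\mu_1$, $\tilde\mu_2+\tilde\mu_3 = \mu(\mu_2+\mu_3)$ and $\tilde\mu_2-\tilde\mu_3 = \nu\epsilon + \mu(\mu_2-\mu_3)$. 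Passing to the complex coordinates $(z_1,\bar z_2)$, Hamilton's equations close into a $2\times 2$ system with eigenvalues $-i(\tilde\mu_2-\tilde\mu_3)\pm\sqrt{\tilde\mu_1^2-(\tilde\mu_2+\tilde\mu_3)^2}$; together with the conjugate block these are the four eigenvalues of $A_{\nu,\mu}$, organizing into the $\pm$-pairs demanded by the form $\chi_{\nu,\mu}(X^2)$, and the roots of $\chi_{\nu,\mu}$ work out to $\big(\sqrt{\tilde\mu_1^2-(\tilde\mu_2+\tilde\mu_3)^2}\mp i(\tilde\mu_2-\tilde\mu_3)\big)^2$. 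Since $\tilde\mu_1^2-(\tilde\mu_2+\tilde\mu_3)^2 = \mu^2\big(\mu_1^2-(\mu_2+\mu_3)^2\big)$, its sign is independent of $(\nu,\mu)$: when $|\mu_1|>|\mu_2+\mu_3|$ the square root is real and I choose $\nu$ with $\tilde\mu_2-\tilde\mu_3\neq 0$, forcing both roots genuinely complex (focus-focus); when $|\mu_1|<|\mu_2+\mu_3|$ the square root is imaginary and the same choice makes both roots real, negative, and distinct (elliptic-elliptic); when $|\mu_1|=|\mu_2+\mu_3|$ the square root vanishes identically for every $(\nu,\mu)$, so $\chi_{\nu,\mu}$ always has a repeated root and $0$ is degenerate.

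\textbf{Integrability.} As $\{J,H\}=0$ is built in, it remains to check $dJ\wedge dH\not\equiv 0$. The hypothesis $(\mu_1,\mu_2+\mu_3)\neq(0,0)$ is exactly the statement that $H_2$ is not a scalar multiple of $J$ (the multiples of $J$ are precisely the forms with $\mu_1=0$ and $\mu_2+\mu_3=0$), so the polynomial $2$-form $dH_2\wedge dJ$ is not identically zero and hence vanishes only on a measure-zero algebraic set; the $O(3)$ remainder does not disturb independence on a dense open set, which I would confirm by exhibiting one point with $dJ\wedge dH\neq 0$ and invoking the leading-order polynomial structure. The step I expect to fight with is the eigenvalue bookkeeping in the type computation: grouping the four eigenvalues into the required $\pm$-pairs and verifying that $\nu$ can always be tuned to produce two distinct nonzero roots in the two non-degenerate regimes while no such tuning exists in the borderline case. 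Everything else is an invariant-theory triviality or a short symmetry reduction.
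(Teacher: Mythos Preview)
The paper does not prove this lemma; it is quoted verbatim from \cite[Proposition~7.5]{LFP-fam2} and used as a black box. So there is no ``paper's own proof'' to compare your attempt against, and your write-up is effectively a standalone proof sketch.

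On its merits, your argument is essentially correct. The normal-form step is clean: linearity of the $S^1$-action preserves degree, so invariance of $H$ forces invariance of its quadratic part, and the weight-count classification of invariant quadratic monomials is right. The type computation is also correct; your eigenvalue formula $-i(\tilde\mu_2-\tilde\mu_3)\pm\sqrt{\tilde\mu_1^2-(\tilde\mu_2+\tilde\mu_3)^2}$ for the $2\times2$ block in the $(z_1,\bar z_2)$ coordinates checks out, and the case split on the sign of $\mu_1^2-(\mu_2+\mu_3)^2$ gives exactly the three regimes. You should spell out explicitly that in the elliptic-elliptic case you also need $|\tilde\mu_2-\tilde\mu_3|\neq\sqrt{(\tilde\mu_2+\tilde\mu_3)^2-\tilde\mu_1^2}$ to keep both roots of $\chi_{\nu,\mu}$ nonzero, but this is again handled by a generic choice of $\nu$.

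The one soft spot is the integrability paragraph. Your observation that $(\mu_1,\mu_2+\mu_3)\neq(0,0)$ is exactly the condition that $H_2$ is not a scalar multiple of $J$ is correct, and this does give $dJ\wedge dH_2\not\equiv0$ as a polynomial $2$-form. But ``the $O(3)$ remainder does not disturb independence on a dense open set'' is not an argument: $R_3$ is only $C^\infty$, so the zero set of $dJ\wedge dH$ need not be algebraic. What you can say cleanly is that along any ray $t\mapsto tp$ with $(dJ\wedge dH_2)(p)\neq0$ one has $(dJ\wedge dH)(tp)=t^2\big[(dJ\wedge dH_2)(p)+O(t)\big]\neq0$ for small $t>0$, so $dJ\wedge dH$ is nonvanishing on an open set near the origin; this is enough for the local statement the lemma is actually used for. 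If you want the global ``almost everywhere'' assertion literally, you need an additional argument (or to check what the cited source really claims).
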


For a point $p \in M$ fixed by the $S^1$-action, the isotropy weights are the weight vectors of the isotropy representation of $S^1$ on $T_pM$. In dimension $4$, there are two (counted with multiplicity) isotropy weights at each fixed point. The above lemma discusses the case when the isotropy weights are $\pm 1$. In fact, this is the only case in which $p$ can be a focus-focus singularity of the semitoric system. Another case of interest is when $p$ lies on a surface fixed by the $S^1$-action. Since the action is effective, the isotropy weights must be $0,1$ or $0,-1$. We prove the following lemma assuming that $p$ has isotropy weights $0,1$. Notice that we could have used the complex coordinates $z_1,z_2$ and put $J$ as $|z_1|^2$, but this will put $H$ in a complicated form. Since the non-degeneracy and the type of singularity only depend on the second-order terms in the Taylor expansion, we use the real coordinates to simplify the statement and the proof.

\begin{lemma}\label{lem:rank-0-on-fixed-surface}
    Consider the symplectic vector space $(\R^4, \omega = dx_1\wedge dy_1 + dx_2\wedge dy_2)$. Let $J(x_1,y_1,x_2,y_2) = x_1^2+y_1^2$ and $H \in \mathcal{C}^{\infty}(\R^4,\R)$ be a smooth function. Then $\{J,H\} =0$, $H(0) =0$ and $dH(0) =0$ if and only if there exist $a,b,c,d \in \R$, $R_3 \in \mathcal{C}^\infty(\R^4,\R)$ with $R_3 = O(3)$ such that $H =a (x_1^2+y_1^2) + bx_2^2 + cx_2y_2+dy_2^2+R_3(x_1,y_1,x_2,y_2)$ and $\{J,R_3\}=0$. Moreover, in this case, $(0,0)$ is an elliptic-elliptic (elliptic-hyperbolic) point of $(J,H)$ if and only if $(0,0)$ is an elliptic (hyperbolic) singular point of $H|_{J=0}$ in the Morse sense.
\end{lemma}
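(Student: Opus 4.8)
The plan is to treat the two assertions of the lemma separately. For the structural statement, the reverse implication is immediate: if $H = a(x_1^2+y_1^2)+bx_2^2+cx_2y_2+dy_2^2+R_3$ with $R_3=O(3)$ and $\{J,R_3\}=0$, then $H(0)=0$ and $dH(0)=0$ because every listed term vanishes to second order, while $\{J,H\}=0$ follows once I check that each quadratic monomial $x_1^2+y_1^2$, $x_2^2$, $x_2y_2$, $y_2^2$ Poisson-commutes with $J=x_1^2+y_1^2$ (a one-line computation, since $J$ involves only $x_1,y_1$ and the bracket in these Darboux coordinates pairs $\partial_{x_i}$ with $\partial_{y_i}$). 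For the forward implication, the key observation is that $X_J=2y_1\,\partial_{x_1}-2x_1\,\partial_{y_1}$ generates the periodic rotation of the $(x_1,y_1)$-plane fixing the $(x_2,y_2)$-plane, so $\{J,H\}=0$ is equivalent to invariance of $H$ under this $S^1$-action. Writing $H=H_2+R_3$ for its second-order Taylor polynomial $H_2$ and remainder $R_3=O(3)$ (legitimate since $H(0)=0$ and $dH(0)=0$), I will use that the action is \emph{linear} to conclude that $H_2$ is itself $S^1$-invariant; the invariant homogeneous quadratics are spanned precisely by $x_1^2+y_1^2$, $x_2^2$, $x_2y_2$, $y_2^2$, which forces the stated form of $H_2$. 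Finally $\{J,R_3\}=\{J,H\}-\{J,H_2\}=0$.

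For the ``moreover'' clause I will compute the pencil $A_{\nu,\mu}=\Omega^{-1}(\nu\Hess(J)_0+\mu\Hess(H)_0)$ at the origin directly. In the splitting $\R^4=\{(x_1,y_1)\}\oplus\{(x_2,y_2)\}$ all three matrices $\Omega$, $\Hess(J)_0=\operatorname{diag}(2,2,0,0)$, and $\Hess(H)_0$ are block diagonal, so $A_{\nu,\mu}$ is block diagonal as well. Its upper block is $(2\nu+2a\mu)\,\Omega_1^{-1}$ with $\Omega_1=\left(\begin{smallmatrix}0&1\\-1&0\end{smallmatrix}\right)$, contributing eigenvalues $\pm i(2\nu+2a\mu)$, and its lower block is $\mu\,\Omega_1^{-1}\left(\begin{smallmatrix}2b&c\\c&2d\end{smallmatrix}\right)$, which has trace $0$ and determinant $\mu^2(4bd-c^2)$, hence eigenvalues $\pm\mu\sqrt{c^2-4bd}$. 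Thus the two roots of $\chi_{\nu,\mu}$ are $-(2\nu+2a\mu)^2$ and $\mu^2(c^2-4bd)$; choosing $\mu=1$ and a generic $\nu$ makes them distinct and nonzero exactly when $c^2-4bd\neq 0$, so the origin is a non-degenerate rank-zero point precisely under that condition.

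I will then read off the type from Definition~\ref{def:rank-0}: the first root is always strictly negative, so one direction is always elliptic, while the sign of the second root equals the sign of $c^2-4bd$. Hence the origin is elliptic-elliptic when $c^2-4bd<0$ and elliptic-hyperbolic when $c^2-4bd>0$. On the other hand $H|_{J=0}$ is the restriction to the $(x_2,y_2)$-plane, with Hessian $\left(\begin{smallmatrix}2b&c\\c&2d\end{smallmatrix}\right)$ of determinant $4bd-c^2$; in the Morse sense the origin is elliptic (definite Hessian) when $4bd-c^2>0$ and hyperbolic (indefinite Hessian) when $4bd-c^2<0$. Comparing the two sign conditions gives the claimed equivalence, and the matching non-degeneracy condition $c^2-4bd\neq 0$ makes the correspondence exact.

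I expect the only genuinely delicate point to be the forward direction of the structural statement, specifically the passage from $\{J,H\}=0$ to invariance of the quadratic part $H_2$. The clean way to handle it is to invoke linearity of the $S^1$-action, so that invariance descends degree-by-degree in the Taylor expansion and no appeal to Schwarz's theorem on smooth invariants is needed; everything else reduces to a bounded finite-dimensional eigenvalue computation.
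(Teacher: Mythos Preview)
Your proof is correct. The ``moreover'' clause is handled exactly as in the paper: both compute the block-diagonal pencil $\Omega^{-1}(\nu\Hess(J)+\mu\Hess(H))$, read off the roots $-(2\nu+2a\mu)^2$ and $\mu^2(c^2-4bd)$ of $\chi_{\nu,\mu}$, and compare the sign of the second to the determinant $4bd-c^2$ of the Hessian of $H|_{J=0}$.

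The forward direction of the structural statement is where you diverge. The paper writes out a generic quadratic Taylor polynomial $\sum a_{ij}x_ix_j+\sum b_{ij}x_iy_j+\sum c_{ij}y_iy_j$, computes $\{J,H\}$ term by term, and reads off from the vanishing of the degree-$2$ part that $a_{11}=c_{11}$ and $a_{12}=b_{11}=b_{12}=b_{21}=c_{12}=0$. You instead observe that $\{J,H\}=0$ is equivalent to invariance of $H$ under the linear $S^1$-action generated by $X_J$, that linearity forces each homogeneous Taylor piece to be invariant separately, and that the invariant quadratics are exactly the span of $x_1^2+y_1^2,\,x_2^2,\,x_2y_2,\,y_2^2$. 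Your route is cleaner and explains \emph{why} those particular cross-terms vanish (they carry nonzero $S^1$-weight), and it generalizes immediately to higher-degree truncations; the paper's direct computation, on the other hand, requires no symmetry reasoning and is entirely self-contained at the level of polynomial identities. Either way the conclusion $\{J,R_3\}=\{J,H\}-\{J,H_2\}=0$ drops out the same.
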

\begin{proof}
    It is clear that if $H$ is in the given form, $\{J,H\}=0$, $H(0)=0$ and $dH(0)=0$. Conversely, we take the Taylor expansion of $H$. Since $H(0)=0$ and $dH(0)=0$, $H$ has no constant or linear terms. Hence, we can write $H$ as 
    \[H= \sum_{i \leq j} a_{ij}x_ix_j + \sum_{i,j} b_{ij}x_iy_j + \sum_{i \leq j} c_{ij}y_iy_j +R_3(x_1,y_1,x_2,y_2).\]
    It is straightforward to check that
    \[\{J,H\} = 2b_{11}x_1^2+2b_{21}x_1x_2+4(c_{11}-a_{11})x_1y_1+2c_{12}x_1y_2 -2a_{12}x_2y_1-2b_{11}y_{11}^2-2b_{12}y_1y_2 + \{J,R_3\}. \]
    Since $\{J,R_3\}$ contains terms of order at least $3$, $\{J,H\}=0$ implies that 
    \[a_{11}=c_{11},\ a_{12}=b_{11}=b_{12}=b_{21}=c_{12}=0, \textrm{ and } \{J,R_3\}=0.\]    
    Therefore, $H = a_{11}(x_1^2+y_1^2)+a_{22}x_2^2+b_{22}x_2y_2+c_{22}y_2^2+R_3(x_1,y_1,x_2,y_2)$ and $\{J,R_3\}=0$.

    In this case, we have the following matrices:
    \[\Omega = \begin{pmatrix}
        \mathbf{J} & \mathbf{O}\\ \mathbf{O} & \mathbf{J}
    \end{pmatrix}, \
    \Hess(J) = \begin{pmatrix}
        2\mathbf{I} & \mathbf{O}\\ \mathbf{O} & \mathbf{O}
    \end{pmatrix}, \ \text{and} \
    \Hess(H) = \begin{pmatrix}
        2a_{11}\mathbf{I} & \mathbf{O}\\ \mathbf{O} & \mathbf{D}
    \end{pmatrix},  \]
    where $\mathbf{J} = \begin{pmatrix}
        0 & 1 \\ -1 & 0
    \end{pmatrix}$, $\mathbf{D} = \begin{pmatrix}
        2a_{22} & b_{22} \\ b_{22} & 2c_{22}
    \end{pmatrix}$, $\mathbf{O}$ is the zero matrix, $\mathbf{I}$ is the identity matrix, and $\Omega$ is the matrix of the symplectic form in these coordinates.
    Fix a number $\lambda \neq -a_{11}$ and consider $A_{\lambda,1} = \Omega^{-1}(\lambda\Hess(J)+\Hess(H))$. The characteristic polynomial of $A_{\lambda,1}$ is $[x^2+(2\lambda+2a_{11})^2][x^2+4a_{22}c_{22}-{b_{22}}^2]$. Hence, $\chi_{\lambda,1} (X)= (X+(2\lambda+2a_{11})^2)(X+4a_{22}c_{22}-{b_{22}}^2)$. The two roots are two real numbers $\lambda_1 = -(2\lambda+2a_{11})^2$ and $\lambda_2={b_{22}}^2-4a_{22}c_{22}$. Since $\lambda \neq -a_{11}$, we have $\lambda_1<0$. By Definition~\ref{def:rank-0}, $(0,0)$ is elliptic-elliptic if and only if $\lambda_2<0$, and elliptic-hyperbolic if and only if $\lambda_2>0$. Finally, we notice that $\mathbf{D}$ is the Hessian matrix of $H|_{J=0}$, so $(0,0)$ is an elliptic (respectively, hyperbolic) singular point in the Morse sense if and only if $\det(\mathbf{D})=-\lambda_2>0$ (respectively, $\det(\mathbf{D})=-\lambda_2<0$). The claim follows immediately.
\end{proof}

Recall that in a semitoric transition family, the type of singularity of the transition point changes as the parameter varies. The following proposition establishes a criterion for the existence of a transition point, and shows that the transition times are given by the roots of a quadratic polynomial.

\begin{proposition}\label{prop:EE-FF-EE}
    Let $J = \frac{1}{2}|z_1|^2 - \frac{1}{2}|z_2|^2$. Fix $\mu_1,\mu_2,\mu_3,\psi,\nu_2,\nu_3 \in \R$ and $Q_3,R_3 \in \mathcal{C}^\infty(\C^2,\R)$ with $Q_3(z_1,z_2) = O(3), R_3(z_1,z_2) = O(3)$ and $\{J,Q_3\}= \{J,R_3\}=0$. Define $H_0(z_1,z_2) := \nu_2|z_1|^2 + \nu_3|z_2|^2+ Q_3(z_1,z_2)$ and $H_1(z_1,z_2):= \mu_1 \mathfrak{R}(e^{i\psi}z_1z_2) +\mu_2|z_1|^2+\mu_3|z_2|^2 +R_3(z_1,z_2)$. Suppose the following holds:
    \begin{itemize}
        \item $H_0(0,0)$ is a non-degenerate local minimum in the Morse-Bott sense;
        \item $H_1(0,0)$ is a non-degenerate local maximum in the Morse-Bott sense; and
        \item $(\mu_2+\mu_3)^2 > \mu_1^2 > 0$.
    \end{itemize}    
    Then for $t \in [0,1]$, $(J,H_t:=(1-t)H_0+tH_1)$ is integrable, and there exist $0< t^- <t^+ <1$ such that the singular point $(0,0)$ of $(J,H_t)$ is
    \begin{itemize}
        \item of elliptic-elliptic type when $t \in [0,t^-) \cup (t^+,1]$;
        \item of focus-focus type when $t\in(t^-,t^+)$; and
        \item degenerate when $t=t^-,t^+$.
    \end{itemize}
\end{proposition}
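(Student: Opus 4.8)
The plan is to recognize the one-parameter family $H_t=(1-t)H_0+tH_1$ as a path through the normal forms of Lemma~\ref{lem:LFP} and then reduce the entire statement to a sign analysis of a single quadratic polynomial in $t$.

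First I would observe that, since $\{J,H_0\}=\{J,H_1\}=0$ and both $H_0,H_1$ vanish to second order at the origin, the interpolation
\[
H_t=t\mu_1\,\mathfrak{R}(e^{i\psi}z_1z_2)+\big[(1-t)\nu_2+t\mu_2\big]|z_1|^2+\big[(1-t)\nu_3+t\mu_3\big]|z_2|^2+\big[(1-t)Q_3+tR_3\big]
\]
is again exactly of the form covered by Lemma~\ref{lem:LFP}, with $\psi$ unchanged, with the cubic remainder $(1-t)Q_3+tR_3$ still $O(3)$ and Poisson-commuting with $J$, and with effective coefficients $\mu_1^{(t)}=t\mu_1$ and $\mu_2^{(t)}+\mu_3^{(t)}=A(t):=(1-t)(\nu_2+\nu_3)+t(\mu_2+\mu_3)$. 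Because $\mu_1\neq0$ forces $(\mu_1^{(t)},A(t))\neq(0,0)$ for $t>0$, while $A(0)=\nu_2+\nu_3$ is nonzero once the minimum hypothesis is in force, Lemma~\ref{lem:LFP} immediately gives integrability of $(J,H_t)$ for every $t\in[0,1]$ and tells me that the type of $(0,0)$ is governed entirely by the sign of $g(t):=A(t)^2-t^2\mu_1^2$: elliptic-elliptic when $g(t)>0$, focus-focus when $g(t)<0$, and degenerate when $g(t)=0$.

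The second step is to convert the two Morse--Bott hypotheses into inequalities on the coefficients. Restricting $H_0,H_1$ to the fibre $J^{-1}(0)=\{|z_1|=|z_2|\}$ and using the invariant $w=z_1z_2$ (so that $|z_1|^2=|z_2|^2=|w|$ there), the leading behaviour is
\[
H_0|_{J^{-1}(0)}\sim(\nu_2+\nu_3)\,|w|,\qquad H_1|_{J^{-1}(0)}\sim|w|\big[(\mu_2+\mu_3)+\mu_1\cos(\arg w+\psi)\big].
\]
Thus $(0,0)$ being a non-degenerate local minimum of $H_0$ on the fibre is equivalent to $\nu_2+\nu_3>0$, and being a non-degenerate local maximum of $H_1$ on the fibre is equivalent to $(\mu_2+\mu_3)+|\mu_1|<0$, i.e. to $\mu_2+\mu_3<0$ together with $(\mu_2+\mu_3)^2>\mu_1^2$ (the third hypothesis, which also supplies $\mu_1\neq0$). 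Writing $P:=\nu_2+\nu_3>0$, $S:=\mu_2+\mu_3<0$, $m:=\mu_1\neq0$, I record the consequences I will need: $P-S=P+|S|>|S|>|m|$ and $S^2>m^2$.

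Finally I analyse $g(t)=\big(P+t(S-P)\big)^2-t^2m^2=P^2-2P(P-S)\,t+\big((P-S)^2-m^2\big)t^2$. Its leading coefficient $(P-S)^2-m^2$ is positive, so $g$ is an upward parabola; moreover $g(0)=P^2>0$ and $g(1)=S^2-m^2>0$, so both endpoints are elliptic-elliptic. The vertex sits at $t_\ast=\frac{P(P-S)}{(P-S)^2-m^2}$, which the inequalities above place strictly inside $(0,1)$, and its value is $g(t_\ast)=\frac{-P^2m^2}{(P-S)^2-m^2}<0$. A positive-leading quadratic that is positive at both endpoints and negative at an interior vertex has exactly two roots $0<t^-<t^+<1$, with $g<0$ on $(t^-,t^+)$ and $g>0$ on $[0,t^-)\cup(t^+,1]$; translating through the type criterion of Step one yields elliptic-elliptic type on $[0,t^-)\cup(t^+,1]$, focus-focus type on $(t^-,t^+)$, and the degenerate case exactly at $t^\pm$ (where $g=0$ but $(\mu_1^{(t)},A(t))\neq(0,0)$, so the system stays integrable). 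The main obstacle I anticipate is the second step: making rigorous the passage from ``non-degenerate local extremum in the Morse--Bott sense'' to these coefficient inequalities, since the restricted Hamiltonians are only conical (degree-one homogeneous in $|w|$) at the fixed point, so the non-degeneracy has to be read off from the angular factor being bounded away from zero rather than from an ordinary Hessian; everything after that is the elementary quadratic estimate above.
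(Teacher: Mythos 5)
Your proposal is correct and follows essentially the same route as the paper: reduce via Lemma~\ref{lem:LFP} to the sign of the quadratic $f(t)=\bigl((1-t)(\nu_2+\nu_3)+t(\mu_2+\mu_3)\bigr)^2-t^2\mu_1^2$, and locate its two roots in $(0,1)$ from the inequalities $\nu_2+\nu_3>0$, $\mu_2+\mu_3<0$, $(\mu_2+\mu_3)^2>\mu_1^2>0$ (your vertex computation is an equivalent substitute for the paper's check of $f,f'$ at the endpoints). The one step you flag as delicate --- extracting those inequalities from the Morse--Bott hypotheses --- is where the paper diverges from you: rather than restricting to the singular fiber $J^{-1}(0)$ (a cone, on which ``non-degenerate in the Morse--Bott sense'' has no direct meaning, as you note), it reads the hypotheses off the full Hessians on $\C^2$: positive semi-definiteness and nonvanishing of $\nu_2|z_1|^2+\nu_3|z_2|^2$ give $\nu_2+\nu_3>0$, and the identity $\det\bigl(\lambda I-\Hess(H_1)\bigr)=\bigl(\lambda^2-2(\mu_2+\mu_3)\lambda+4\mu_2\mu_3-\mu_1^2\bigr)^2$ forces the eigenvalue sum, hence $\mu_2+\mu_3$, to be negative. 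If you want to keep your fiber picture you would first have to justify passing from the ambient extremum to the angular condition on $J^{-1}(0)$; the Hessian route avoids this entirely and is the cleaner way to close the gap you anticipated.
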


\begin{proof}
Since $H_0$ has a non-degenerate local minimum (in the Morse-Bott sense) at the origin, the quadratic form $\nu_2|z_1|^2 + \nu_3|z_2|^2$ must be positive semi-definite and nonzero. This implies that $\nu_2+\nu_3>0$. Moreover, since $H_1$ has a non-degenerate local maximum (in the Morse-Bott sense) at the origin, $\Hess(H_1)$ has non-positive eigenvalues with at least one strictly negative eigenvalue, so the sum of eigenvalues of $\Hess(H_1)$ is strictly negative. Since $\text{det}\big(\lambda I-\text{Hess}(H_1)  \big)=\big(\lambda^2-2(\mu_2+\mu_3)\lambda+4 \mu_2 \mu_3 -\mu_1^2  \big)^2$, it follows that $\mu_2+\mu_3<0$.

Consider the function $H_t=(1-t)H_0+tH_1=t\mu_1\mathfrak{R}(e^{i\psi} z_1 z_2)+ \big(t\mu_2+(1-t)\nu_2 \big)|z_1|^2 + \big(t\mu_3+(1-t)\nu_3 \big)|z_2|^2+O(3)$. When $t \neq 0$, we have $t\mu_1 \neq 0$, so by Lemma~\ref{lem:LFP} $(J,H_t)$ is integrable for $t \in (0,1]$. Since $\nu_2+\nu_3 >0$, $(J,H_0)$ is integrable by the same lemma. Moreover, by Lemma~\ref{lem:LFP} the singularity type of $(J,H_t)$ at $(0,0)$ is entirely determined by the sign of 
\[f(t):=\big(t\mu_2+(1-t)\nu_2 +t\mu_3+(1-t)\nu_3  \big)^2-(t\mu_1)^2.\]
The point is elliptic–elliptic when $f(t)>0$, focus–focus when $f(t)<0$, and degenerate when $f(t)=0$. Thus, the problem is reduced to analyzing the  function  $f(t)$ on the interval $[0,1]$. 

So far, we have the following inequalities:
\begin{enumerate}
    \item $(\mu_2+\mu_3)^2>\mu_1^2$,
    \item $\mu_1^2 > 0$,
    \item $\nu_2+\nu_3>0$, and
    \item $\mu_2+\mu_3<0$.
\end{enumerate}
We can rewrite $f(t)$ as the following quadratic function in $t$:
\[f(t)=\{\big((\mu_2+\mu_3)-(\nu_2+\nu_3) \big)^2-\mu_1^2\}t^2+ 2\big((\mu_2+\mu_3)-(\nu_2+\nu_3)\big)(\nu_2+\nu_3)t +(\nu_2+\nu_3)^2.\]
By (1),(3), and (4), the coefficient of $t^2$ in $f(t)$, which is $\big((\mu_2+\mu_3)-(\nu_2+\nu_3) \big)^2-\mu_1^2=\{(\mu_2+\mu_3)^2-\mu_1^2\}-2(\mu_2+\mu_3)(\nu_2+\nu_3)+(\nu_2+\nu_3)^2$, is strictly positive. By (2) and (3), its discriminant $\De = 4\mu_1^2(\nu_2+\nu_3)^2$ is strictly positive. Hence $f(t)$ is a concave up parabola with two distinct real roots. Finally, we check their position relative to the interval $[0,1]$. At the left endpoint $t=0$, by (3) and (4), we have
\[f(0)=(\nu_2+\nu_3)^2>0 \quad \text{and} \quad f'(0)=2\big((\mu_2+\mu_3)-(\nu_2+\nu_3)\big)(\nu_2+\nu_3)<0,\] 
At the right endpoint $t=1$, by (1), (3), and (4), we have
\[f(1)=(\mu_2+\mu_3)^2-\mu_1^2 > 0 \quad \text{and} \quad f'(1)= 2[\{(\mu_2+\mu_3)^2-\mu_1^2 \}-(\mu_2+\mu_3)(\nu_2+\nu_3)]>0.\]

Hence there exist $t^-, t^+$  such that $0< t^{-} < t^{+} < 1$ with $f(t^{\pm})=0$, and we have
\[
f(t) > 0 \ \text{ for } \ t \in [0,t^{-}) \cup (t^{+},1], \qquad
f(t) < 0 \ \text{ for } \ t \in (t^{-}, t^{+}).
\]
By Lemma~\ref{lem:LFP}, $(0,0)$ is a singular point of elliptic-elliptic type for \(t \in [0,t^{-}) \cup (t^{+},1]\), of focus-focus type for \(t \in (t^{-}, t^{+})\), and degenerate at \(t^{\pm}\). This shows the claimed transition between elliptic-elliptic and focus-focus singularity.
\end{proof}

\subsubsection{Rank one points}
Recall that a point $p\in M$ has rank one if $dJ_p$ and $dH_p$ span a one-dimensional vector space. When $dJ_p \neq 0$, $p$ is not a fixed point of the $S^1$-action. If $S^1$ acts freely on $p$, we can apply symplectic reduction (at least locally) at the level $J(p)$ and study the type of singularity of $[p] \in M^{\textrm{red},J(p)} = J^{-1}(J(p))/S^1$ of the reduced function $H^{\textrm{red},J(p)}$ to which $H$ descends.
\begin{lemma}\label{lem:LFP-rk1}\cite[Lemma 2.6]{LFP24}
    Let $(M,\omega, F=(J,H))$ be a 4-dimensional integrable system, where $J$ is the moment map of an effective Hamiltonian $S^1$-action on $M$. Let $p \in M$ be a point on which $S^1$ acts freely. Let $j= J(p)$. Then $p$ is a rank one singular point of $F$ if and only if $[p]$ is a singular point of $H^{\textrm{red},j}$. Moreover, $p$ is a non-degenerate singular point of $F$ if and only if $[p]$ is non-degenerate for $H^{\textrm{red},j}$ (in the Morse sense) and the type of $p$ is elliptic-regular (respectively hyperbolic-regular) if and only if $[p]$ is an elliptic (respectively hyperbolic) singular point of $H^{\textrm{red},j}$.
\end{lemma}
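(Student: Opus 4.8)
The plan is to route everything through symplectic reduction at the level $j$, and then to match the linear-algebraic data that defines non-degeneracy and type on the ambient manifold with the Morse data of $H^{\textrm{red},j}$ on the reduced surface. Write $M^{\textrm{red},j}=J^{-1}(j)/S^1$. Since $S^1$ acts freely at $p$, the infinitesimal generator $X_J(p)$ is nonzero, so $dJ_p\neq 0$ and $j$ is a regular value of $J$ near $p$; hence $M^{\textrm{red},j}$ is a smooth symplectic surface near $[p]$, and because $\{J,H\}=0$ the function $H$ is $S^1$-invariant and descends to $H^{\textrm{red},j}$. For the first equivalence I would observe that $T_pJ^{-1}(j)=\ker dJ_p$ and that $dH_p(X_J(p))=\omega_p(X_H(p),X_J(p))=-\{J,H\}(p)=0$, so the restriction $dH_p|_{\ker dJ_p}$ annihilates the orbit line $\R X_J(p)$ and thus descends to exactly $d(H^{\textrm{red},j})_{[p]}$. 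Therefore $[p]$ is a critical point of $H^{\textrm{red},j}$ if and only if $dH_p$ annihilates $\ker dJ_p$, i.e. $dH_p\in\R\,dJ_p$; as $dJ_p\neq 0$, this is precisely the condition that $p$ has rank one.

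For the non-degeneracy and type statements the crucial identification is $L^\omega/L\cong T_{[p]}M^{\textrm{red},j}$. At a rank-one point $dH_p=\lambda\,dJ_p$ for some $\lambda\in\R$, whence $X_H(p)=\lambda X_J(p)$, so $L=\R X_J(p)$ is one-dimensional; moreover $L^\omega=\{v:\omega_p(X_J(p),v)=0\}=\ker dJ_p=T_pJ^{-1}(j)$, and quotienting by the orbit line recovers the reduced tangent space, with the descended $\omega$ equal to the reduced symplectic form. Now set $\tilde H:=H-\lambda J$, so that $d\tilde H_p=0$ and $\Hess(\tilde H)_p=\Hess(H)_p-\lambda\Hess(J)_p$, which is exactly $\nu\Hess(J)_p+\mu\Hess(H)_p$ with $(\nu,\mu)=(-\lambda,1)$. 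Thus the operator $A_{-\lambda,1}=\Omega_p^{-1}\Hess(\tilde H)_p$ is the one whose descent to $L^\omega/L$ governs non-degeneracy and type.

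The heart of the argument is a descent lemma for the Hessian. Because $\tilde H$ is $S^1$-invariant, differentiating the identity $d\tilde H(X_J)\equiv 0$ at the critical point $p$ yields $\Hess(\tilde H)_p(X_J(p),\cdot)=0$, so $X_J(p)$ lies in the radical of $\Hess(\tilde H)_p|_{\ker dJ_p}$; this quadratic form therefore descends to $L^\omega/L$ and, under the identification above, equals $\Hess(H^{\textrm{red},j})_{[p]}$. Combined with the fact that $\omega$ descends to the reduced form with matrix $\Omega^{\textrm{red}}$, the descended operator $A_{-\lambda,1}$ on $L^\omega/L$ is exactly $(\Omega^{\textrm{red}})^{-1}\Hess(H^{\textrm{red},j})_{[p]}$, the linearized reduced Hamiltonian flow. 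The dictionary is then immediate on the two-dimensional reduced space: $A_{-\lambda,1}$ is an isomorphism if and only if $\Hess(H^{\textrm{red},j})_{[p]}$ is nondegenerate, i.e. $[p]$ is a Morse-nondegenerate critical point; the eigenvalues $\pm i\alpha$ (elliptic-regular) occur exactly when the reduced Hessian is definite, i.e. $[p]$ is an elliptic critical point, and the eigenvalues $\pm\alpha$ (hyperbolic-regular) occur exactly when it is indefinite, i.e. $[p]$ is a hyperbolic (saddle) critical point.

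I expect the main obstacle to be this descent lemma: verifying cleanly that the ambient Hessian of the shifted function $\tilde H$, restricted to $\ker dJ_p$, descends to the reduced Hessian, and that this descent is compatible with the descent of $\omega$ so that the two operators \emph{literally coincide} rather than merely share a spectrum. Concretely, one must check that the identification $L^\omega/L\cong T_{[p]}M^{\textrm{red},j}$ intertwines $\Omega_p^{-1}$ with $(\Omega^{\textrm{red}})^{-1}$, which is where the standard reduction package—the reduced form being the unique form whose pullback to $J^{-1}(j)$ agrees with $\iota^*\omega$—does the work. Everything else reduces to elementary linear algebra on a two-dimensional symplectic vector space.
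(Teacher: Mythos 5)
The paper does not prove this lemma: it is quoted with a citation to \cite[Lemma 2.6]{LFP-fam2}, so there is no in-paper argument to compare against. Your proposal is correct and is the standard proof of this fact: the identification $L^\omega/L\cong T_{[p]}M^{\textrm{red},j}$, the descent of $\Hess(H-\lambda J)_p$ together with $\omega_p$ to the reduced data, and the two-dimensional observation that $\Omega^{-1}S$ is trace-free with $\det(\Omega^{-1}S)=\det S$ yield exactly the stated rank, non-degeneracy, and elliptic/hyperbolic dichotomies. The one step worth writing out is the "descent lemma" you flag: differentiating the identity $d\tilde H(X_J)\equiv 0$ at the critical point $p$ gives $\Hess(\tilde H)_p(X_J(p),v)=0$ for \emph{all} $v\in T_pM$, not merely for $v\in\ker dJ_p$, and it is this stronger statement that guarantees $A_{-\lambda,1}$ maps $L$ to $0$ and preserves $L^\omega$, so that it genuinely descends to $L^\omega/L$ and coincides there with $(\Omega^{\textrm{red}})^{-1}\Hess(H^{\textrm{red},j})_{[p]}$.
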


When $dJ_p=0$ and $dH_p \neq 0$, $p$ lies on a fixed surface. In this case, $p$ is automatically an elliptic-regular singularity.

\begin{lemma}\label{lem:rank-1-on-fixed-surface}
    Consider the symplectic vector space $(\C^2, \omega = \frac{i}{2}(dz_1\wedge d\bar{z_1} + dz_2\wedge d\bar{z_2}))$. Let $J(z_1,z_2) = |z_1|^2$ and $H \in \mathcal{C}^{\infty}(\C^2,\R)$ be a smooth function. Then $\{J,H\} =0$, $H(0) =0$ and $dH(0) \neq 0$ if and only if there exist $a,b \in \R$ not both zero, $R_2 \in \mathcal{C}^\infty(\C^2,\R)$ with $R_2 = O(2)$ such that $H =a \mathcal{R}(z_2) + b\mathcal{I}(z_2) + R_2(z_1,z_2)$ and $\{J,R_2\}=0$. Moreover, in this case, $(0,0)$ is an elliptic-regular point of $(J,H)$.
\end{lemma}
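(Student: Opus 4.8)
The plan is to treat the two assertions separately: first the characterization of $H$, then the determination of the singularity type. Throughout I would work in the real coordinates $z_j = x_j + i y_j$, in which $\omega = dx_1\wedge dy_1 + dx_2\wedge dy_2$, so that the Poisson bracket is the standard one and $\{J,H\} = 2x_1\partial_{y_1}H - 2y_1\partial_{x_1}H = 2\,\partial_{\theta_1}H$, where $\theta_1 = \arg z_1$. Thus $\{J,H\}=0$ is equivalent to $H$ being invariant under the circle action $z_1\mapsto e^{i\theta}z_1$ (with $z_2$ fixed). The reverse implication of the first claim is then immediate: if $H = a\mathcal{R}(z_2)+b\mathcal{I}(z_2)+R_2$ with $R_2=O(2)$ and $\{J,R_2\}=0$, then $H(0)=0$, $dH(0)=a\,dx_2+b\,dy_2\neq 0$ since $(a,b)\neq(0,0)$, and $\{J,H\}=\{J,a\mathcal{R}(z_2)+b\mathcal{I}(z_2)\}+\{J,R_2\}=0$ because $\mathcal{R}(z_2),\mathcal{I}(z_2)$ do not depend on $z_1$.

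For the forward implication I would Taylor expand $H$ at the origin. The hypothesis $H(0)=0$ kills the constant term, and invariance under $z_1\mapsto e^{i\theta}z_1$ forces each homogeneous piece of the expansion to be invariant as well. The only $S^1$-invariant linear functions are spanned by $x_2$ and $y_2$, so the degree-one part of $H$ is $a x_2 + b y_2 = a\mathcal{R}(z_2)+b\mathcal{I}(z_2)$; the condition $dH(0)\neq 0$ then gives $(a,b)\neq(0,0)$. Setting $R_2 := H - a\mathcal{R}(z_2)-b\mathcal{I}(z_2)$ yields a remainder with $R_2=O(2)$ and $\{J,R_2\}=\{J,H\}=0$, which is the desired form.

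For the type of the singularity, I would first note that $(0,0)$ has rank one, since $dJ(0)=0$ while $dH(0)\neq0$. Because $dJ(0)=0$ we have $X_J(0)=0$, whereas $X_H(0)=b\,\partial_{x_2}-a\,\partial_{y_2}\neq0$; hence $L=\mathrm{span}\{X_J(0),X_H(0)\}=\mathrm{span}\{X_H(0)\}$ is one-dimensional and lies in the $(x_2,y_2)$-plane. Since $\omega$ is block-diagonal, $L^\omega=\mathrm{span}\{\partial_{x_1},\partial_{y_1}\}\oplus L$, and the quotient $L^\omega/L$ is naturally identified with the $(x_1,y_1)$-plane carrying the reduced form $dx_1\wedge dy_1$. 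The natural combination to use is $J$ itself, whose Hamiltonian vector field vanishes at $p$: the operator $A_{1,0}=\Omega_p^{-1}\Hess(J)_p$ is block-diagonal, equal to $-2\mathbf{J}$ on the $(x_1,y_1)$-plane and to $0$ on the $(x_2,y_2)$-plane, where $\mathbf{J}=\left(\begin{smallmatrix}0&1\\-1&0\end{smallmatrix}\right)$. Consequently $A_{1,0}$ maps $L$ to $0$ and preserves $L^\omega$, so it descends to $L^\omega/L$, acting there as $-2\mathbf{J}$ with eigenvalues $\pm 2i$. These are purely imaginary and nonzero, so $A_{1,0}$ is an isomorphism with eigenvalues of the form $\pm i\alpha$, $\alpha=2\neq0$, and $(0,0)$ is therefore elliptic-regular.

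The routine parts are the Taylor-expansion bookkeeping and the matrix computation; I expect the only genuinely delicate point to be the choice of operator on the quotient $L^\omega/L$. For $\mu\neq 0$ the operator $A_{\nu,\mu}$ need not preserve $L$ or $L^\omega$ (this reflects the nonzero linear part of $H$), so one cannot use an arbitrary combination; the correct and essentially forced choice is the combination $\nu J+\mu H$ whose Hamiltonian vector field vanishes at $p$, namely $J$. Once this is recognized, the fact that $\Hess(J)_p$ annihilates the $(x_2,y_2)$-directions makes the descent and the eigenvalue computation transparent, and the elliptic-regular conclusion matches the heuristic that the weight-$1$ normal direction $z_1$ contributes an elliptic block while the weight-$0$ direction $z_2$, along which $dH\neq0$, is regular.
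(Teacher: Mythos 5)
Your proposal is correct and follows essentially the same route as the paper: the first claim by Taylor expansion against the $S^1$-invariance forced by $\{J,H\}=0$ (which the paper delegates to the computation in Lemma~\ref{lem:rank-0-on-fixed-surface}), and the second by noting $L=\operatorname{span}\{X_H(0)\}$ and computing $A_{1,0}=\Omega^{-1}\Hess(J)$ on $L^\omega/L$ to get eigenvalues $\pm 2i$. The only difference is that you work in real coordinates where the paper uses the basis $\partial/\partial z_1,\partial/\partial\bar z_1$; your remark that the vanishing of $\nu\,dJ_p+\mu\,dH_p$ forces the choice $\mu=0$ is a correct and worthwhile observation that the paper leaves implicit.
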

\begin{proof}
    The proof of the first claim is similar to that of Lemma~\ref{lem:rank-0-on-fixed-surface}, so we leave it to the readers to verify. Since $X_J(0)=0$, the real span of $X_J(0)$ and $X_H(0)$ is the same as the real span of $X_H(0)$. Let $L$ denote this real span. Then $L^\omega = \R(\frac{\partial}{\partial z_1}, \frac{\partial}{\partial \overline{z}_1}, X_H(0))$. Hence, we can use the equivalent classes of $\frac{\partial}{\partial z_1}$ and $\frac{\partial}{\partial \overline{z}_1}$ as a basis of $L^{\omega}/L$. Under this basis, we have the following matrix representation:
    \[\Omega = \begin{pmatrix}
        0 & \frac{i}{2}\\ -\frac{i}{2} & 0
    \end{pmatrix} \quad \text{and} \quad \Hess(J) = \begin{pmatrix}
        0 & 1\\ 1 & 0
    \end{pmatrix}. \]
    Hence, $A_{1,0} = \Omega^{-1}\Hess(J)$ has eigenvalues $\pm2i$, so by definition $(0,0)$ is an elliptic-regular point of $(J,H)$.
\end{proof}

Lemma~\ref{lem:rank-0-on-fixed-surface} and Lemma~\ref{lem:rank-1-on-fixed-surface} together imply the following non-degeneracy result for points on a fixed surface.
\begin{proposition}\label{prop:singular-points-on-fixed-surface}
    Let $(M,\omega, (J,H))$ be a 4-dimensional integrable system, where $J$ is the moment map of an effective Hamiltonian $S^1$-action on $M$. Let $\Sigma \subset M^{S^1}$ be a fixed surface. Then any rank one singular point $p \in \Sigma$ is of elliptic-regular type. Moreover, a rank zero singular point $p\in \Sigma$ is  of elliptic-elliptic type (elliptic-hyperbolic type, respectively) if and only if $p$ is an elliptic (hyperbolic resp.) singular point of $H|_\Sigma$ in the Morse sense. 
\end{proposition}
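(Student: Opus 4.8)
The plan is to reduce this global statement to the two local models already established in Lemma~\ref{lem:rank-0-on-fixed-surface} and Lemma~\ref{lem:rank-1-on-fixed-surface}. The key preliminary observation is that both the non-degeneracy and the type of a singular point are symplectic invariants depending only on the $2$-jets of $J$ and $H$ at $p$ together with $\omega_p$: they are read off from the eigenvalues of $A_{\nu,\mu}=\Omega_p^{-1}(\nu\Hess(J)_p+\mu\Hess(H)_p)$, and these eigenvalues are unchanged by a symplectic change of coordinates. Similarly, the Morse type of $H|_\Sigma$ at $p$ depends only on the $2$-jet of $H|_\Sigma$ and is a diffeomorphism invariant. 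Hence it suffices to compute everything in a conveniently chosen local model.

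First I would bring $J$ into normal form near $p$. Since $p$ lies on a fixed surface and the $S^1$-action is effective, the isotropy weights at $p$ are $0,1$ or $0,-1$; by replacing $J$ with $-J$ if necessary (which leaves the classification untouched, as it merely changes $\nu\mapsto-\nu$ in $A_{\nu,\mu}$), I may assume the weights are $0,1$. The local normal form theorem for Hamiltonian group actions~\cite{GS84} then provides symplectic coordinates $(x_1,y_1,x_2,y_2)$ centered at $p$ in which $\omega = dx_1\wedge dy_1 + dx_2\wedge dy_2$ and $J-J(p)$ equals $x_1^2+y_1^2$ up to a positive scalar, with the fixed surface $\Sigma$ given locally by $\{x_1=y_1=0\}$. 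The scalar is harmless, since rescaling $J$ by a positive constant again only reparametrizes the pencil $A_{\nu,\mu}$. After subtracting the constants $J(p)$ and $H(p)$, which affects neither the Hamiltonian vector fields nor the Hessians, I am exactly in the hypotheses of the two lemmas.

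With this model the two cases follow immediately. If $p$ has rank one then $dJ_p=0$ while $dH_p\neq0$, so Lemma~\ref{lem:rank-1-on-fixed-surface} (after passing to $z_1=x_1+iy_1$, $z_2=x_2+iy_2$) shows that $p$ is of elliptic-regular type. If $p$ has rank zero then $dH_p=0$ as well, and Lemma~\ref{lem:rank-0-on-fixed-surface} identifies the type of $p$ with the Morse type of $H|_{\{J=0\}}$ at the origin. The last point is to observe that in these coordinates $\{J=0\}$ is precisely $\{x_1=y_1=0\}$, which is the local description of $\Sigma$; hence $H|_{\{J=0\}}=H|_\Sigma$ near $p$, and their Morse types coincide. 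This yields the claimed equivalence: $p$ is elliptic-elliptic (resp.\ elliptic-hyperbolic) if and only if it is an elliptic (resp.\ hyperbolic) critical point of $H|_\Sigma$.

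The step I expect to require the most care is the reduction to the local model: invoking the equivariant normal form to realize $J$ in the exact quadratic shape used by the lemmas, verifying that $\Sigma$ becomes the coordinate plane on which $J$ vanishes identically, and recording that the singularity type is invariant both under the normalizing symplectomorphism and under the sign change $J\mapsto-J$ used to fix the weights. Once these identifications are in place, the remainder is simply quoting the two preceding lemmas and matching $\{J=0\}$ with $\Sigma$.
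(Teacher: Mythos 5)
Your proposal is correct and follows essentially the same route as the paper, which states this proposition as a direct consequence of Lemma~\ref{lem:rank-0-on-fixed-surface} and Lemma~\ref{lem:rank-1-on-fixed-surface}; you have merely made explicit the reduction to those local models via the equivariant local normal form, the harmlessness of the sign change and positive rescaling of $J$, and the identification of $\{J=0\}$ with $\Sigma$ near $p$. No gaps.
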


\section{Morse Functions on quadrilateral spaces}\label{sec:non-deg}
Recall that Lemma~\ref{lem:LFP-rk1} implies that the non-degeneracy of rank one singularities can be checked by passing to the symplectic reduced spaces. We implement this idea in this section by studying functions on quadrilateral spaces, which, by Proposition~\ref{prop:reduction}, are symplectomorphic to the reduced spaces of interest. The main result, Proposition~\ref{prop:rank-1}, combined with the aforementioned lemma and proposition, will establish the non-degeneracy of singularities of $F_t$. We start by proving a key lemma.

\begin{lemma}\label{lem:equations}
    Let $c,r_3,r_4,r_5$ be positive numbers such that $\mathcal{P}(c,r_3,r_4,r_5)$ is a smooth manifold.
    For $t \in (0,1)$, define $H_t: \mathcal{P}(c,r_3,r_4,r_5) \to \R$ by $H_t([(\rho_0,\rho_3,\rho_4,\rho_5)]) = (1-t)|\rho_4+\rho_5|^2+t|\rho_3+\rho_4|^2$. Then $m \in \mathcal{P}(c,r_3,r_4,r_5)$ is a singular point of $H_t$ if and only if there exists a representative $\rho$ with $\rho_0=(c,0,0), \rho_i=(a_i,b_i,0)$ for $i=3,4,5$ and a nonzero number $a \in \R$ such that 
    \begin{equation}\label{eqn:singular-condition}
    ab_3 =t(a_4b_3-a_3b_4) \quad \textrm{ and } \quad ab_5 = (1-t)(a_4b_5-a_5b_4). \tag{$*$}
\end{equation}
    In this case, the following holds:
    \begin{enumerate}
        \item $b_3,b_5$ are nonzero. When $t \neq \dfrac{1}{2}$, $b_4$ is also nonzero.
        \item  When $t \neq \dfrac{1}{2}$ or $b_4 \neq 0$, we have $K:= \dfrac{tb_5}{(1-t)b_3} \neq -1$. When $t = \dfrac{1}{2}$ and $b_4 =0$, we have $a_3b_5-a_5b_3\neq0$.
    \end{enumerate}
\end{lemma}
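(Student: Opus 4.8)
The plan is to read off the singular-point condition from Lemma~\ref{lem:singular} and then use the residual rotational freedom to make the configuration planar. Applying Lemma~\ref{lem:singular} to the quadrilateral $\mathcal{P}(c,r_3,r_4,r_5)$, with the edge of length $c$ in the role of the first edge, and writing $\rho_0=(c,0,0)$ and $\rho_i=(a_i,b_i,z_i)$ for $i=3,4,5$, the point $Q$ is singular if and only if there is an $a\in\R$ with
\[(a,0,0)\times\rho_3=t\,\rho_4\times\rho_3,\qquad (a,0,0)\times\rho_5=(1-t)\,\rho_4\times\rho_5.\]
The reverse implication is then immediate: if the representative is planar ($z_3=z_4=z_5=0$), the $x$- and $y$-components of both identities vanish automatically, the surviving $z$-components are exactly $(*)$, and $(a,0,0)\times\rho_0=0$, so Lemma~\ref{lem:singular} applies. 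For the forward implication I would first produce such a planar representative. Taking $x$-components above (and using $t,1-t\neq0$) gives $b_4z_3-z_4b_3=0$ and $b_4z_5-z_4b_5=0$; combined with $\sum_i b_i=\sum_i z_i=0$, the three vectors $(b_i,z_i)$, $i=3,4,5$, lie on a common line through the origin of the $yz$-plane. A rotation about the $x$-axis fixes $\rho_0$ and, by $SO(3)$-equivariance of the cross product (so $R(u\times v)=Ru\times Rv$ with $R(a,0,0)=(a,0,0)$), preserves both identities with the same $a$; choosing it to carry that line to the $y$-axis makes $z_3=z_4=z_5=0$, and the $z$-components then read off $(*)$.

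All the remaining claims rest on the fact that a smooth polygon space contains no collinear configuration: a collinear configuration is fixed by the circle of rotations about its supporting line, so $SO(3)$ does not act locally freely there and the quotient could not be smooth. If $a=0$, then $(*)$ forces $\rho_3,\rho_5\parallel\rho_4$ in the plane, and since $\rho_3+\rho_4+\rho_5=(-c,0,0)\neq0$ the common direction is the $x$-axis and $Q$ is collinear, a contradiction; hence $a\neq0$. For $(1)$, if $b_3=0$ the first equation of $(*)$ becomes $a_3b_4=0$, and as $a_3=\pm r_3\neq0$ this gives $b_4=0$, whence $b_5=0$ and again collinearity; so $b_3\neq0$, and symmetrically (using the second equation and $a_5=\pm r_5\neq0$) $b_5\neq0$. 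If $t\neq\tfrac12$ and $b_4=0$, then $b_5=-b_3$ and $(*)$ gives $a=ta_4=(1-t)a_4$, so $(2t-1)a_4=0$, forcing $a_4=0$ and hence $a=0$, a contradiction; thus $b_4\neq0$.

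For $(2)$ I would eliminate $a$ between the two equations of $(*)$ (legitimate since $b_3,b_5\neq0$), obtaining
\[(2t-1)\,a_4b_3b_5=b_4\big(t\,a_3b_5-(1-t)\,a_5b_3\big).\]
Suppose $K=\tfrac{tb_5}{(1-t)b_3}=-1$, i.e. $tb_5=-(1-t)b_3$; then $b_5=-\tfrac{1-t}{t}b_3$, and $\sum_i b_i=0$ gives $b_4=\tfrac{1-2t}{t}b_3$. When $t\neq\tfrac12$, substituting these into the displayed relation and cancelling the nonzero factor $\tfrac{(1-2t)(1-t)}{t}b_3^{2}$ collapses it to $a_3+a_4+a_5=0$, contradicting $a_3+a_4+a_5=-c<0$; hence $K\neq-1$. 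When $t=\tfrac12$, the same formula yields $b_4=0$ whenever $K=-1$, so the hypothesis $b_4\neq0$ likewise forces $K\neq-1$. Finally, when $t=\tfrac12$ and $b_4=0$ we have $b_5=-b_3$ and, from $(*)$, $a=\tfrac12a_4$; then $a_3b_5-a_5b_3=-b_3(a_3+a_5)$, and if this vanished we would get $\rho_5=-\rho_3$, hence $r_5=r_3$, contradicting $r_3>r_5$. Therefore $a_3b_5-a_5b_3\neq0$.

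The main obstacle is the forward direction: it is not a priori clear that a singular point admits a planar representative, and it is precisely the parallelism argument together with the exclusion of collinear configurations that delivers this. The most delicate algebraic point is $(2)$, where the generic case $t\neq\tfrac12$ secretly uses $c>0$ and the degenerate case $t=\tfrac12,\,b_4=0$ uses the hypothesis $r_3>r_5$.
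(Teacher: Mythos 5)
Your proof is correct and follows essentially the same route as the paper: it reduces to Lemma~\ref{lem:singular}, planarizes the representative, rules out $a=0$ and the vanishing of $b_3,b_5$ (and $b_4$ when $t\neq\tfrac12$) via the impossibility of collinear configurations in a smooth polygon space, and derives $K\neq-1$ by the same substitution leading to $a_3+a_4+a_5=0$. The only differences are cosmetic: you spell out the coplanarity/rotation step that the paper asserts in one line, and in part (2) you eliminate $a$ by cross-multiplying rather than equating two expressions for $a$, which is algebraically equivalent.
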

\begin{proof}
    Consider a representative $\rho=(\rho_0,\rho_3,\rho_4,\rho_5) \in \widetilde{\mathcal{P}}(c,r_3,r_4,r_5)$ of $m$ such that $\rho_0 =(c,0,0)$. By Lemma~\ref{lem:singular}, $m$ is a singular point of $H_t$ if and only if there exists  $a \in \R$ such that $(a,0,0) \times \rho_3 = t\rho_4\times \rho_3$ and $(a,0,0) \times \rho_5 = (1-t)\rho_4\times \rho_5$.
    This implies that $\rho_0,\rho_3,\rho_4,\rho_5$ are coplanar, so we can further assume that $\rho_i = (a_i,b_i,0)$ for $i=3,4,5$. If $a=0$, then $\rho_3,\rho_4,\rho_5$ are colinear. Since $\rho_0+\rho_3+\rho_4+\rho_5=0$, this implies that $\rho_0,\rho_3,\rho_4,\rho_5$ are all colinear, which is impossible since $\mathcal{P}(c,r_3,r_4,r_5)$ is a smooth manifold. Hence, we conclude that $m$ is a singular point of $H_t$ if and only if there exists a representative $\rho$ of the form $\rho_0=(c,0,0),\rho_i=(a_i,b_i,0)$ for $i=3,4,5$ such that there exists  $a \in \R \setminus \{0\}$ with 
\begin{equation*}
    ab_3 =t(a_4b_3-a_3b_4) \quad \textrm{ and }  \quad ab_5 = (1-t)(a_4b_5-a_5b_4).
\end{equation*}

    (1) If $b_3 =0$, then the equation $ab_3=t(a_4b_3-a_3b_4)$ implies that $b_4 =0$. However, since $\rho_0+\rho_3+\rho_4+\rho_5=0$, this further implies that $b_5=0$ and thus $\rho_0,\rho_3,\rho_4,\rho_5$ are all colinear, which is impossible, so $b_3 \neq 0$. Similarly, $b_5 \neq 0$.  If $b_4 =0$, then Equation~\eqref{eqn:singular-condition} implies that $a = ta_4 $ and $ a=(1-t) a_4$. Since both $a$ and $a_4$ are nonzero, $t= \frac{1}{2}$. Hence, we conclude that $b_4$ is nonzero whenever $t \neq \dfrac{1}{2}$.
    
    (2) When $t\neq \dfrac{1}{2}$, we prove $K\neq -1$ by contradiction. If $K = -1$, then $b_5=-\dfrac{1-t}{t}b_3$, so $b_4 = -b_3-b_5 = \dfrac{1-2t}{t}b_3 =-\dfrac{1-2t}{1-t}b_5$. 
    Since $ab_3 =t(a_4b_3-a_3b_4)$ and $b_3 \neq 0$, we can substitute $b_4$ by $\dfrac{1-2t}{t}b_3$ and divide by $b_3$ on both sides to get
\[a = t\bigg(a_4 - a_3 \frac{1-2t}{t}\bigg) = ta_4 - (1-2t)a_3.\]
Similarly, using $ab_5 =(1-t)(a_4b_5-a_5b_4)$ and $b_4 = -\dfrac{1-2t}{1-t}b_5$, we obtain
\[a = (1-t)\bigg(a_4 + a_5 \frac{1-2t}{1-t}\bigg) = (1-t)a_4 + (1-2t)a_5.\]
These two equations imply that $(1-2t)(a_3+a_4+a_5) = 0$. If $t\neq \dfrac{1}{2}$, $a_3+a_4+a_5=0$. This is impossible as $c+a_3+a_4+a_5=0$ and $c \neq 0$. Therefore, $K \neq -1$ whenever $t \neq \dfrac{1}{2}$.

    When $t =\dfrac{1}{2}$, $K = \dfrac{b_5}{b_3}$. Since $b_3+b_4+b_5=0$, we have $1+\dfrac{b_4}{b_3}+K=0$. Hence, if $b_4 \neq 0$, $K \neq -1$. If $b_4 =0$, then $b_3 = -b_5$, so $a_3b_5-a_5b_3 = b_5(a_3+a_5)$. Assume for the sake of contradiction that $b_5(a_3+a_5)=0$, then $a_3= - a_5$, because $b_5$ is nonzero. This implies $r_3= r_5$. Furthermore, since $c+a_3+a_4+a_5 =0$, we have $c+a_4=0$. Since $b_4=0$, $|a_4|= r_4$, so $c= r_4$. However, this means that $c+r_3-r_4-r_5=0$, contradicting the assumption that $\mathcal{P}(c,r_3,r_4,r_5)$ is a smooth manifold. Hence, we conclude that $a_3b_5-a_5b_3 \neq 0$.
\end{proof}

\begin{proposition}\label{prop:rank-1}
    Let $c,r_3,r_4,r_5$ be positive numbers such that $M:= \mathcal{P}(c,r_3,r_4,r_5)$ is a smooth manifold. For $t \in (0,1)$, define $H_t: \mathcal{P}(c,r_3,r_4,r_5) \to \R$ by $H_t([(\rho_0,\rho_3,\rho_4,\rho_5)]) = (1-t)|\rho_4+\rho_5|^2+t|\rho_3+\rho_4|^2$. Then $H_t$ is a Morse function with only elliptic singularities.
\end{proposition}

\begin{proof}
    Let $m \in M$ be a singular point of $H_t$. By Lemma~\ref{lem:equations}, there exists a representative $\rho$ of $m$ with $\rho_0=(c,0,0), \rho_i=(a_i,b_i,0)$ for $i=3,4,5$ and a nonzero number $a \in \R$ such that 
    \begin{equation}
    ab_3 =t(a_4b_3-a_3b_4) \quad \textrm{ and }  \quad ab_5 = (1-t)(a_4b_5-a_5b_4).\tag{$*$}
\end{equation}
    To compute the Hessian at $m$, we find $W_1,W_2 \in T_\rho\widetilde{\mathcal{P}}(c,r_3,r_4,r_5)$ such that $[W_1],[W_2]$ form a basis of $T_mM$ and then find the matrix representation of the Hessian of $H_t$ under this basis.
Since $\widetilde{\mathcal{P}}(c,r_3,r_4,r_5)$ is a submanifold of $S^2_c \times S^2_{r_3} \times S^2_{r_4} \times S^2_{r_5}$, it suffices to find a coordinate chart of $S^2_c \times S^2_{r_3} \times S^2_{r_4} \times S^2_{r_5}$ around $\rho$. By item (1) of Lemma~\ref{lem:equations}, we have $b_3,b_5 \neq 0$. Depending on whether $b_4$ is zero, we choose different charts.

{\bf Case 1: when $b_4 \neq 0$, we choose the charts with coordinates $(y_0,z_0,x_3,z_3,x_4,z_4,x_5,z_5)$.}
In this chart, the infinitesimal vector field at $\rho$ generated by the $SO(3)$-action is 
\[V_1=b_3\frac{\partial}{\partial z_3}+b_4\frac{\partial}{\partial z_4}+b_5\frac{\partial}{\partial z_5}  , \ V_2=-c\frac{\partial}{\partial z_0}-a_3\frac{\partial}{\partial z_3}-a_4\frac{\partial}{\partial z_4}-a_5\frac{\partial}{\partial z_5}  , \ V_3=c\frac{\partial}{\partial y_0}-b_3\frac{\partial}{\partial x_3}-b_4\frac{\partial}{\partial x_4}-b_5\frac{\partial}{\partial x_5}.\]

Let $V_\rho$ be the real span of $V_1,V_2,V_3$. Define $W_1 =\frac{\partial}{\partial z_3}-\frac{\partial}{\partial z_5}$ and $W_2 =(a_5b_3-a_3b_5)\frac{\partial}{\partial y_0}-b_3b_5\frac{\partial}{\partial x_3}+b_3b_5\frac{\partial}{\partial x_5}$. 
It is straightforward to verify that $W_1,W_2,V_1,V_2,V_3$ form a basis of $T_\rho\widetilde{\mathcal{P}}(c,r_3,r_4,r_5)$ which is the kernel of $d\mu_\rho$, where $\mu: S^2_c \times S^2_{r_3} \times S^2_{r_4} \times S^2_{r_5} \to \R^3$ is the moment map of the $SO(3)$-action. Since $T_mM \cong T_\rho\widetilde{\mathcal{P}}(c,r_3,r_4,r_5)/V_\rho$, $[W_1],[W_2]$ form a basis of $T_mM$. 

Recall that the Hessian of a function $f$ on a manifold $N$ at a singular point $n \in N$ can be computed in the following way:
if $X$ and $Y$ are tangent vectors in $T_nN$, then $\Hess(f)_n(X,Y): = X \cdot (\widetilde{Y} \cdot f)(n)$, where $\widetilde{Y}$ denotes a vector field extending $Y$ locally.

Since $\widetilde{\mathcal{P}}(c,r_3,r_4,r_5)$ is a principal $SO(3)$-bundle over $\mathcal{P}(c,r_3,r_4,r_5)$, 
$\Hess(H_t)_{m}([W_i],[W_j]) =\Hess(H_t)_{\rho}(W_i,W_j)$. We will compute the latter by extending $W_1,W_2$ locally on $\widetilde{\mathcal{P}}(c,r_3,r_4,r_5)$. Let $\widetilde{W}_i$ be a local extension of $W_i$ for $i=1,2$. We can write it as 
\[\widetilde{W}_i = f^i \frac{\partial}{\partial y_0} + g^i \frac{\partial}{\partial z_0} + \sum_{j=3}^5 f_j^i \frac{\partial}{\partial x_j} + \sum_{j=3}^5 g_j^i \frac{\partial}{\partial z_j}.\]
Since $\widetilde{W}_i \in T_\rho\widetilde{\mathcal{P}}(c,r_3,r_4,r_5)=\textrm{ker}(d\mu_\rho)$, the coefficient functions satisfy the following equation:
\begin{equation}\label{eqn:useful}
    -f^i \frac{y_0}{x_0} - g^i \frac{z_0}{x_0} + f_3^i+f_4^i+f_5^i =0.
\end{equation}

It is straightforward to compute that 
\[W_1(\widetilde{W}_iH_1)(\rho) = \frac{a_4b_3-a_3b_4}{b_3}\left( \frac{\partial f_3^i}{\partial z_3}(\rho)-\frac{b_3}{b_4}\frac{\partial f_4^i}{\partial z_3}(\rho)-\frac{\partial f_3^i}{\partial z_5}(\rho)+\frac{b_3}{b_4}\frac{\partial f_4^i}{\partial z_5}(\rho) \right) -g_3^i(\rho)\frac{b_4}{b_3},\]
\[W_1(\widetilde{W}_iH_0)(\rho) = \frac{a_4b_5-a_5b_4}{b_5}\left( \frac{\partial f_5^i}{\partial z_3}(\rho)-\frac{b_5}{b_4}\frac{\partial f_4^i}{\partial z_3}(\rho)-\frac{\partial f_5^i}{\partial z_5}(\rho)+\frac{b_5}{b_4}\frac{\partial f_4^i}{\partial z_5}(\rho) \right) +g_5^i(\rho)\frac{b_4}{b_5}.\]
Since $ab_3 =t(a_4b_3-a_3b_4)$, $ab_5 = (1-t)(a_4b_5-a_5b_4)$, and $b_3+b_4+b_5=0$, we have 
\begin{align*}
    W_1(\widetilde{W}_iH_t)(\rho) & = t W_1(\widetilde{W}_iH_1)(\rho) + (1-t) W_1(\widetilde{W}_iH_0)(\rho) \\
    & =a \left( \frac{\partial (f_3^i+f_4^i+f_5^i)}{\partial z_3}(\rho)-\frac{\partial (f_3^i+f_4^i+f_5^i)}{\partial z_5}(\rho) \right) -tg_3^i(\rho)\frac{b_4}{b_3}+(1-t) g_5^i(\rho)\frac{b_4}{b_5}. 
\end{align*}
By differentiating Equation~\eqref{eqn:useful} with respect to $z_j$ for $j=3,4,5$ and evaluating it at $\rho$, we have $\frac{\partial (f_3^i+f_4^i+f_5^i)}{\partial z_j}(\rho)=0$. Hence, we have
\[ W_1(\widetilde{W}_iH_t)(\rho)  = -tg_3^i(\rho)\frac{b_4}{b_3}+(1-t) g_5^i(\rho)\frac{b_4}{b_5},\]
which evaluates to $-\frac{b_4}{b_3}t-\frac{b_4}{b_5}(1-t)$ when $i=1$ and evaluates to $0$ when $i=2$.
Similarly, one can compute
\[ W_2(\widetilde{W}_2H_t)(\rho)  = \frac{a(a_3b_5-a_5b_3)^2}{c}-t\frac{a_3^2+b_3^2}{b_3}b_4b_5^2 - (1-t) \frac{a_5^2+b_5^2}{b_5}b_4b_3^2.\]
Thus, the matrix representation of the Hessian of $H_t$ at $m$ under the basis $[W_1],[W_2]$ is

\[\begin{pmatrix}
    -t\frac{b_4}{b_3}-(1-t)\frac{b_4}{b_5} & 0 \\
    0 & \frac{a}{c}(a_3b_5-a_5b_3)^2-\frac{t}{b_3}(a_3^2+b_3^2)b_4b_5^2-\frac{1-t}{b_5}(a_5^2+b_5^2)b_3^2b_4
\end{pmatrix}.\]

We show that this matrix has a positive determinant. Set $X=a_4b_3-a_3b_4,\ Y=a_4b_5-a_5b_4$, then $X+Y = a_4(b_3+b_5) -(a_3+a_5)b_4 = -a_4b_4+ (c+a_4)b_4 =cb_4$. Since $c \neq 0$, we can write $b_4 = \frac{X+Y}{c}$. By Equation~\eqref{eqn:singular-condition}, $ab_3 =tX, ab_5 =(1-t)Y$, so we have $X = \frac{ab_3}{t}, Y= \frac{ab_5}{1-t}$. Hence,
\[A:= -t\frac{b_4}{b_3}-(1-t)\frac{b_4}{b_5}  = -b_4 \left(\frac{t}{b_3} + \frac{1-t}{b_5} \right) = -\frac{X+Y}{c} \left( \frac{a}{X} + \frac{a}{Y}\right) = -\frac{a}{c}\left (2+\frac{X}{Y}+\frac{Y}{X} \right ).\]
Let $K := \frac{Y}{X} = \frac{tb_5}{(1-t)b_3}$, then $A= -\frac{a}{cK}(K+1)^2$. Similarly, one can compute
\[B:=\frac{a}{c}(a_3b_5-a_5b_3)^2-\frac{t}{b_3}(a_3^2+b_3^2)b_4b_5^2-\frac{1-t}{b_5}(a_5^2+b_5^2)b_3^2b_4 = -\frac{a}{cK}\left[ (Ka_3b_5+a_5b_3)^2 + (K+1)^2b_3^2b_5^2 \right]. \]

By item (2) of Lemma~\ref{lem:equations}, $K \neq -1$, so both $A$ and $B$ are nonzero. Hence, $\det(\Hess(H_t)_m)= AB =\dfrac{a^2}{c^2K^2}(K+1)^2\left[ (Ka_3b_5+a_5b_3)^2 + (K+1)^2b_3^2b_5^2 \right]>0$. We conclude that $m$ is an elliptic singularity (in the Morse sense).

{\bf Case 2: when $b_4 = 0$, then $a_4 \neq 0$, so we choose the chart with coordinates $(y_0,z_0,x_3,z_3,y_4,z_4,x_5,z_5)$.} Notice that by (1) of Lemma~\ref{lem:equations}, this could only happen when $t = \dfrac{1}{2}$. A similar computation shows that under the basis $W_1 =\frac{\partial}{\partial z_0}-\frac{\partial}{\partial  z_3 }$ and $W_2 = (a_5b_3-a_3b_5)\frac{\partial}{\partial y_0}-b_3b_5\frac{\partial}{\partial x_3}+b_3b_5\frac{\partial}{\partial x_5}$, the matrix representation of the Hessian of $H_t$ is 

\[\begin{pmatrix}
    \dfrac{a}{c}& 0 \\
    0 & \dfrac{a}{c}(a_3b_5-a_5b_3)^2
\end{pmatrix}.\]
By (2) of Lemma~\ref{lem:equations}, we know that $a_3b_5-a_5b_3 \neq 0$, so the determinant of the matrix has the same sign as $\dfrac{a^2}{c^2}>0$. We conclude that $m$ is an elliptic singularity (in the Morse sense).
\end{proof}
\section{The transition points}\label{sec:transition}
In this section, we study the transition points and the transition times. 

\begin{proposition}\label{prop:transition-point}
    Let $r_1,\ldots,r_5$ be positive numbers such that $M:=\mathcal{P}(r_1,r_2,r_3,r_4,r_5)$ is a smooth manifold. Let $P,Q,j_P,j_Q,t_P^\pm,t_Q^\pm$ be as in Section~\ref{sec:intro}. Consider the family $F_t=(J,H_t): = (\ell_{12},t\ell_{34}^2 +(1-t)\ell_{45}^2)$ on $M$, where $t \in [0,1]$. 
    
    If the inequalities in case (P) in Theorem~\ref{thm:main} are satisfied, then
    \begin{itemize}
        \item[(1)] $P$ is a rank zero singular point of $F_t$ which is of elliptic-elliptic type for $t \in [0,t_P^-)\cup(t_P^+,1]$ and of focus-focus type for $t \in (t_P^-,t_P^+)$.
    \end{itemize}
    
    If the inequalities in case (Q) in Theorem~\ref{thm:main} are satisfied, then
    \begin{itemize}
        \item[(2)] $Q$ is a rank zero singular point of $F_t$ which is of elliptic-elliptic type for $t \in [0,t_Q^-)\cup(t_Q^+,1]$ and of focus-focus type for $t \in (t_Q^-,t_Q^+)$.
    \end{itemize}
    
    If the inequalities in case (P+Q) in Theorem~\ref{thm:main} are satisfied, then both (1) and (2) hold.
\end{proposition}

\begin{proof}
    The proofs of all three cases are nearly identical, so we will only prove the result in (1) and discuss how to derive the other cases in the last paragraph of this proof. Assume that the inequalities in case (P) in Theorem~\ref{thm:main} are satisfied. Then $r_5>r_4$ and $j_P>0$.

    By Lemma~\ref{lem:fixed-components}, $P$ is a singular point of $J$. By Lemma~\ref{lem:singular}, $P$ is also a singular point of  $H_t$ for all $t \in [0,1]$, so it is a rank zero singularity of $F_t$. To check the type of singularity, we need to compute the Hessian matrix of  $J$ and $H_t$, together with the matrix of the symplectic form with respect to a chosen basis. Since $\mathcal{P}(r_1,r_2,r_3,r_4,r_5)$ is a sub-quotient of $S^2_{r_1} \times S^2_{r_2} \times S^2_{r_3} \times S^2_{r_4} \times S^2_{r_5}$, we can use the standard coordinates on $(\R^3)^5$ to induce the coordinates on $\mathcal{P}(r_1,r_2,r_3,r_4,r_5)$.
    Since $\rho$ is a representative of $P$ with nonzero $x$-coordinates, $(y_1,z_1,y_2,z_2,y_3,z_3,y_4,z_4,y_5,z_5)$ gives a coordinate chart near $\rho$ on the product of spheres. Notice that $T_P\mathcal{P}(r_1,r_2,r_3,r_4,r_5) \cong T_\rho\Tilde{\mathcal{P}}(r_1,r_2,r_3,r_4,r_5)/V_\rho$, where $V_\rho$ is the space of infinitesimal vector fields generated by the $SO(3)$-action, so it suffices to find vectors in $T_\rho\Tilde{\mathcal{P}}(r_1,r_2,r_3,r_4,r_5)$ that descend to a basis of $T_P\mathcal{P}(r_1,r_2,r_3,r_4,r_5)$.

Define $W_1=\frac{\partial}{\partial z_1}-\frac{\partial}{\partial z_3}$, $W_2=\frac{\partial}{\partial z_1}-\frac{\partial}{\partial z_4}$, $W_3=\frac{\partial}{\partial y_3}-\frac{\partial}{\partial y_4}$, $W_4=\frac{\partial}{\partial y_4}-\frac{\partial}{\partial y_5}$. Notice that $T_\rho\Tilde{\mathcal{P}}(r_1,r_2,r_3,r_4,r_5)$ is the kernel of $d\mu_\rho$, where $\mu:  S^2_{r_1}\times S^2_{r_2} \times S^2_{r_3} \times S^2_{r_4} \times S^2_{r_5} \to \R^3$ is the moment map of the $SO(3)$-action, so $W_i \in T_\rho\Tilde{\mathcal{P}}(r_1,r_2,r_3,r_4,r_5)$ for $i=1,2,3,4$.
It is straightforward to verify that $W_1,W_2,W_3,W_4$ span a 4-dimensional vector space that intersects $V_\rho$ trivially, so $[W_1], [W_2],[W_3],[W_4]$ form a basis of $T_P\mathcal{P}(r_1,r_2,r_3,r_4,r_5)$. Using a similar calculation as in the proof of Proposition~\ref{prop:rank-1}, the Hessian of $H_0$, $H_1$, \text{and} $J$, together with the matrix of the symplectic form $\Omega_P$, under this basis  
are given by
\[ \Hess(H_0)_P=
\begin{pmatrix}
    0 & 0 & 0 & 0\\
    0 & \frac{r_5}{r_4} & 0 & 0\\
    0 & 0 &  \frac{r_5}{r_4}  & 1- \frac{r_5}{r_4} \\
    0 & 0 & 1- \frac{r_5}{r_4} &  \frac{r_5}{r_4} + \frac{r_4}{r_5} -2
\end{pmatrix}, \
\text{Hess}(H_1)_P= \begin{pmatrix}
     -\frac{r_4}{r_3}  & 1 & 0 & 0\\
    1 & -\frac{r_3}{r_4} & 0 & 0\\
    0 & 0 & -2-\frac{r_4}{r_3}-\frac{r_3}{r_4}& 1+ \frac{r_3}{r_4}\\
    0 & 0 & 1+ \frac{r_3}{r_4}& - \frac{r_3}{r_4}\\
\end{pmatrix},
\]
\[\text{Hess}(J)_P=\begin{pmatrix}
    1-\frac{j_P}{r_3} & 1 & 0 & 0\\
    1 & 1-\frac{j_P}{r_4} & 0 & 0\\
    0 & 0 & -j_P(\frac{1}{r_3}+\frac{1}{r_4})& \frac{j_P}{r_4}\\
    0 & 0 & \frac{j_P}{r_4}& j_P(\frac{1}{r_5}-\frac{1}{r_4})\\
\end{pmatrix}, \
\Omega^{-1}_P= \begin{pmatrix}
        0 & 0 & r_3 & r_3 \\
        0 & 0 & 0 & r_4\\
        -r_3 & 0 & 0 & 0 \\
        -r_3 & -r_4 & 0 & 0\\
    \end{pmatrix}.\]

Fix $\lambda \in \R$ such that $\lambda > \max(\frac{r_5-r_4}{j_P}, \frac{r_3r_5}{j_P(j_P+r_4)})$ and consider \begin{align*}
    &A_{\lambda,1}:=\Omega_P^{-1}\big(\lambda\text{Hess}(J)_P+\text{Hess}(H_t)_P \big) \\&=
\begin{pmatrix}
        0 & 0 &  -(2r_3+r_4)t + r_3 - j_P\lambda & \frac{r_3}{r_5}((2r_5-r_4)t+r_4 - r_5  + j_P\lambda  )\\
    0 & 0 & (r_3+r_5)t+ r_4 - r_5  + j_P\lambda   & \frac{-(r_3r_5 + (r_4 - r_5)^2)t + (r_4 - r_5)^2+ \lambda(r_4 - r_5)j_P}{r_5}\\
   r_4t+(r_4-r_5)\lambda & -r_3(t+\lambda) & 0 & 0\\
   -r_5\lambda & r_5(t-\lambda-1)& 0 & 0
    \end{pmatrix}.
\end{align*}

The quadratic polynomial corresponding to its characteristic polynomial is given by $\chi_{\lambda,1}(X)=X^2+A(t)\,X+B(t)$,
where
\[
\begin{aligned}
A(t)&=
[(r_3+r_5)^{2}+2r_4j_P ]t^2
+2\!\left[\lambda (r_3+r_5)j_P-r_3r_5-(r_4-r_5)^2\right]t+\lambda^2j_P^2+(\lambda j_P + r_4-r_5)^2,\\
B(t)&=j_P^{2}\,\bigl(-r_{4}t^{2}+r_{3}\lambda t+r_{4}t+r_{5}\lambda t +j_P\lambda^2+ r_{4}\lambda-r_{5}\lambda\bigr)^{2}.
\end{aligned}
\]
By Definition~\ref{def:rank-0}, $P$ is of elliptic-elliptic type if $\chi_{\lambda,1}(X)$ has two distinct negative roots, i.e. $A(t)>0, B(t) > 0,\; A(t)^2 > 4B(t)$; and $P$ is of focus-focus type if $\chi_{\lambda,1}(X)$ has no purely real roots, i.e. $A(t)^2 < 4B(t)$.

We first compare $A(t)^2$ with $4B(t)$. $A(t)^2-4B(t)=\bigl((r_3+r_5)t+ r_4-r_5+2j_P\lambda\bigr)^{2}\,\bigl(a t^{2}+b t+c\bigr),$
where $a= (r_{3}+r_{5})^{2}+4r_{4}j_P, b =-2\bigl(r_4j_P+r_3r_5+(r_4-r_5)^2\bigr), c= (r_{4}-r_{5})^{2}$.
Since $\lambda > \frac{r_5-r_4}{j_P}$, we have $(r_3+r_5)t +r_4-r_5+2j_P\lambda > r_5-r_4 >0$ for all $t \geq 0$. 
Hence, $A(t)^2-4B(t)$ has the same sign as $f(t):=at^2+bt+c$ when $t\in[0,1]$. Since the discriminant of $f(t)$ is $b^{2}-4ac
=16\,r_{3}r_{4}r_{5}j_P>0$ and $a>0$, $f(t)$ has two distinct roots; moreover, by the root formula, the two roots are $t_P^\pm$.
We conclude that 
\begin{itemize}
    \item when $t \in (-\infty,t_P^-)\cup(t_P^+,\infty)$, we have $A(t)^2>4B(t)$, so $\chi_{\lambda,1}(X)$ has two distinct real roots.
    \item when $t \in(t_P^-,t_P^+)$, we have $A(t)^2<4 B(t)$, so $\chi_{\lambda,1}(X)$ has no real roots, and it follows that $P$ is of focus-focus type.
\end{itemize}
Moreover, since $f(0) =c>0 ,f'(0)=b<0, f(1) =(r_3+r_4)^2>0, f'(1) =2(r_3+r_4)^2+2r_4j_P+2r_3r_5 >0$, the two roots $t_P^{\pm}$ of $f(t)$ must both be in $(0,1)$.

We now show that when $t\in[0,t_P^-)\cup(t_P^+,1]$, both roots of $\chi_{\lambda,1}(X)$  are negative by showing that $A(t)>0,B(t)>0$. To check that $B(t)>0$, it suffices to show that $g(t):=-r_{4}t^{2}+r_{3}\lambda t+r_{4}t+r_{5}\lambda t +j_P\lambda^2+ r_{4}\lambda-r_{5}\lambda$ has no roots in $[0,1]$. Since $\lambda >\frac{r_5-r_4}{j_P}>0$, $g(0) = \lambda(j_P\lambda +r_4-r_5)$ and $g(1) = \lambda(j_P\lambda + r_3+r_4)$ are both positive. Since $g$ describes a parabola opening downward, $g(t)$ has no roots in $[0,1]$, i.e., $B(t)= j_P^2g(t)^2 >0$ for all $t\in[0,1]$. As for $A(t)$, we notice that the coefficient of $t^2$ is positive and the discriminant $\Delta$ of $A(t)$ is 
\[\Delta =-\,4j_P\Bigl(
[j_P^3+2r_4j_P^2+(r_4^2+4r_3r_5)j_P]\lambda^2 +2r_4j_P(j_P+r_4)\lambda - 2r_3r_4r_5+r_4^2j_P\Bigr).\]
Since $\lambda > \frac{r_3r_5}{j_P(j_P+r_4)}$, we have $2r_4j_P(j_P+r_4)\lambda  - 2r_3r_4r_5 > 0$, so $\Delta$ is negative and $A(t)$ is always positive. Consequently, $P$ is of elliptic–elliptic type for $t\in[0,t_P^-)\cup(t_P^+,1]$, and of focus–focus type for $t\in(t_P^-,t_P^+)$. This concludes the proof of case (1).

To derive the proof of case (2) from case (1), we first notice that the inequalities in case (Q) imply that $r_3>r_4$ and $j_Q>0$.  Then one replaces $P$ with $Q$, $r_3$ with $-r_3$, and $r_5$ with $-r_5$ in the preceding Hessian computations and sign analysis, and argues in the same manner that the relevant quantities remain positive or negative under the choice  $\lambda > \max(\frac{r_3-r_4}{j_Q},\frac{r_3r_5}{j_Q(r_4+j_Q)})$. 
Finally, the last case can be proved by noticing that the inequalities in case (P+Q) imply that $r_3,r_5>r_4$, $j_P,j_Q>0$ and joining the proof of case (1) and case (2).
\end{proof}

\bibliographystyle{alpha}
\bibliography{ref}

\end{document}